\newcommand{\bea}{\begin{eqnarray}}
\newcommand{\eea}{\end{eqnarray}}
\newcommand{\bna}{\begin{eqnarray*}}
\newcommand{\ena}{\end{eqnarray*}}
\numberwithin{equation}{section}
\theoremstyle{plain}
\newtheorem{theorem}{Theorem}
\newtheorem{lemma}{Lemma}
\newtheorem{corollary}{Corollary}
\theoremstyle{definition}
\renewcommand{\Re}{\operatorname{Re}}
\begin{document}

\title{On an error term for the first moment of twisted $L$-functions }
\author{Xinyi He}

\address{School of Mathematics and Statistics, \ Shandong University, Weihai, \ Weihai, Shandong 264209, \ China}
\email{hxy1218@mail.sdu.edu.cn}
\date{\today}
\begin{abstract}
Let $f$ be a Hecke-Maass cusp form for the full modular group and 
let $\chi$ be a primitive Dirichlet character modulo a prime $q$. 
Let $s_0=\sigma_0+it_0$ with $\frac{1}{2}\leq\sigma_0<1$. 
We improve the error term for the first moment 
of $L(s_0,f\otimes\chi)\overline{L(s_0,\chi)}$ over 
the family of even primitive Dirichlet characters. 
As an application, we show that for any $t\in\mathbb{R}$, 
there exists a primitive Dirichlet character $\chi$ modulo $q$ 
for which $L(1/2+it,f\otimes\chi)L(1/2+it,\chi)\neq0$ if the 
prime $q$ satisfies $q\gg (1+|t|)^{\frac{543}{25}+\varepsilon}$.
\end{abstract}
\thanks{ This work is supported
by the National Natural Science Foundation of China (Grant No. 11871306).}
\keywords{The first moment, $L$-functions, error term.}
\maketitle
\section{Introduction}
 It is a problem of intensive study in analytic number theory to investigate the values of automorphic $L$-functions. Among others one is particularly interested in establishing asymptotic formula for moments of $L$-functions over families, since there are many applications such as nonvanishing problem of $L$-functions, the subconvexity problem and the problem of non-existence of Landau-Siegel zeros (see Duke \cite{duk}, Kowalski and Michel \cite{komi}, Conrey and Iwaniec \cite{jb}, and Iwaniec and Sarnak \cite{is}).

In \cite{dashan}, Das and Khan proved the following asymptotic formula
\bea\label{1.1r}
\sideset{}{^\dag}\sum_{\chi \bmod q\atop \chi(-1)=1}
L\left(\frac{1}{2},f \otimes \chi\right)
\overline{L\left(\frac{1}{2},\chi\right)}=
\frac{q-2}{2}L(1,f)
+O_{f,\varepsilon}\left(q^{\frac{7}{8}+\theta+\varepsilon}\right),
\eea
where $L(s,f\otimes\chi)$ is the $L$-function associated to a Hecke-Maass cusp form $f$ for the full modular group and a primitive Dirichlet character modulo a prime $q$, $L(s,\chi)$ is the Dirichlet $L$-function, and $L(1,f)$ denotes the $L$-function associated to $f$. Here and throughout the paper, the $\dagger$ means that the summation is over primitive characters and $\theta$ denotes the exponent towards the Ramanujan-Petersson conjecture for $f$,  which can be taken as $\theta=\frac{7}{64}$ due to Kim and Sarnak \cite{kim} (see also Liu \cite{liu} for an interesting related result). Recently, Sono \cite{sono} generalized Das and Khan's results to any complex number $s_0=\sigma_0+it_0$ with $1/2\leq\sigma_0< 1$. More precisely, he proved the following
asymptotic formula
\bea\label{1.1}
&&\sideset{}{^\dagger}\sum_{\chi (\bmod  q)\atop \chi(-1)=1}L(s_0,f\otimes\chi)\overline{L(s_0,\chi)}\notag\\
&=&\frac{q}{2}L(2\sigma_0,f)
+O_{f,\sigma_0,\varepsilon}\big(q^{1-\theta-2\sigma_0+\varepsilon}(q\tau)^{\theta+\varepsilon}
+
q^{-\frac{5}{4}+\frac{3}{2}\sigma_0+\varepsilon}(q\tau)^{\frac{3}{2}-\frac{3}{2}\sigma_0}\notag \\
&+&
(q\tau)^{1-\frac{1}{2}\sigma_0+\theta+\varepsilon}
+q^{-\frac{1}{2}}(q\tau)^{\frac{3}{2}-\frac{1}{2}\sigma_0+\varepsilon}
+q^{\frac{1}{2}}(q\tau)^{\frac{5}{4}-\frac{87}{52}\sigma_0+\frac{15}{26}\theta+\varepsilon}\notag \\
&+&q^{-1}(q\tau)^{\frac{5}{2}-\frac{5}{4}\sigma_0+\varepsilon}
+q^{-1}(q\tau)^{\frac{139}{52}-\frac{87}{52}\sigma_0+\frac{37}{26}\theta+\varepsilon}\big),
\eea
where here and throughout the paper, $\varepsilon>0$ is an arbitrarily small constant and $\tau:=|t_0|+3$. The main aim of our paper is to improve the error term in (\ref{1.1}). More precisely, we prove the following theorem.
\begin{theorem}
Let $f$ be a Hecke-Maass cusp form for $SL(2,\mathbb{Z})$. For prime values of  $q$ and $s_0=\sigma_0+it_0$ with $1/2\leq\sigma_0< 1$, we have
\begin{equation}\label{TH}
\sideset{}{^\dagger}\sum_{\chi \text(\bmod q) \atop \chi(-1)=1}
L(s_0,f\otimes\chi)\overline{L(s_0,\chi)}=\frac{q}{2}L(2\sigma_0,f)+O\left(\sum_{i=1}^{4}R_i\right),
\end{equation}
where the error terms $R_1$,...,$R_{4}$ are given by
\bna
&&R_1=q^{\frac{1}{4}+\varepsilon}\tau^{\frac{3}{2}(1-\sigma_0)},\ \
R_2=(q\tau)^{\frac{3}{2}(1-\sigma_0)+\theta+\varepsilon},\ \
R_3=q^{\frac{1}{2}}(q\tau)^{\frac{1}{2}(1-\sigma_0)+\varepsilon},\ \ \\
&&R_4=\tau(q\tau)^{\frac{3}{2}-\left(1+\frac{\sigma_0(1-2\theta)}{2(1+\theta)}\right)\sigma_0+\varepsilon}.
\ena
The implied constant may depend on $f$, but is independent of $q$ and $\tau$.
\end{theorem}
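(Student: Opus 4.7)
The starting point is the standard approximate functional equation (AFE) for each of the two $L$-factors, expressing
\[
L(s_0,f\otimes\chi)\approx\sum_{n\lesssim q\tau}\frac{\lambda_f(n)\chi(n)}{n^{s_0}}V_1\!\Big(\tfrac{n}{q\tau}\Big)+\text{dual},\qquad \overline{L(s_0,\chi)}\approx\sum_{m\lesssim (q\tau)^{1/2}}\frac{\bar\chi(m)}{m^{\bar s_0}}V_2\!\Big(\tfrac{m}{(q\tau)^{1/2}}\Big)+\text{dual},
\]
for suitable smooth weights $V_j$. Multiplying and averaging $\chi(n)\bar\chi(m)$ over even primitive characters modulo the prime $q$, the orthogonality relation produces three pieces: a principal-character contribution, a diagonal $n=m$ contribution, and an off-diagonal $n\equiv\pm m\pmod q$, $n\neq m$ contribution. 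Treating the diagonal via the Mellin-Barnes representation of $V_1V_2$ and shifting contours past the pole at $s=1$ of a $\zeta$-factor arising from the $m$-sum yields the main term $\tfrac{q}{2}L(2\sigma_0,f)$, together with an acceptable contribution absorbed into $R_1$ and $R_3$.

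\textbf{Off-diagonal analysis via Voronoi.} The bulk of the work is the off-diagonal, which I would write as a shifted sum
\[
S=\sum_{h\neq 0}\sum_m \lambda_f(\pm m+qh)\,W\Big(\tfrac{m}{M},\tfrac{\pm m+qh}{N}\Big)\frac{1}{m^{\bar s_0}(\pm m+qh)^{s_0}},
\]
with $M\asymp(q\tau)^{1/2}$, $N\asymp q\tau$, and $h$ of size at most $N/q\asymp\tau$. Voronoi summation is applied to the $n=\pm m+qh$-variable modulo $q$, converting the inner sum to a dual sum of length $\sim q^2/N=q/\tau$ weighted by a Bessel transform of $W$ and twisted by Kloosterman-type phases $e(\pm\bar n m/q)$. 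After a dyadic decomposition and careful analysis of the Bessel integral (using stationary phase in the $\tau$-aspect), the problem reduces to estimating sums of the form $\sum_{h,n}\lambda_f(n)e(\pm\bar n qh/\text{mod})g(\cdot)$ over various ranges.

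\textbf{Spectral estimates and interpolation.} The remaining Kloosterman-type averages are to be handled by the Kuznetsov trace formula, converting them into spectral sums over Maass forms $u_j$ (and Eisenstein series) involving $\lambda_f(n)$ against $\rho_j(n)$. At this stage I would invoke the spectral large sieve and subconvex/convex bounds on Rankin-Selberg $L$-functions, together with the Kim-Sarnak bound $\theta=7/64$ for the individual Ramanujan exponent of $f$. The unusual shape of $R_4$, with exponent $1+\tfrac{\sigma_0(1-2\theta)}{2(1+\theta)}$, is the signature of a Hölder interpolation between the pointwise bound $\lambda_f(n)\ll n^\theta$ and the Rankin-Selberg second-moment bound $\sum_{n\le X}|\lambda_f(n)|^2\ll X$; choosing the exponent of interpolation optimally as a function of $\sigma_0$ produces exactly this exponent. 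The improvement over \eqref{1.1} comes from applying this interpolation at the correct stage (after Voronoi but before spectral estimates), rather than bounding $\lambda_f$ trivially by $n^\theta$ throughout.

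\textbf{Main obstacle.} The hardest step is bookkeeping the $\tau$-dependence uniformly through the Bessel transforms produced by Voronoi and Kuznetsov. When $\tau$ is large, the weight functions oscillate, and one must perform stationary phase analysis both for the archimedean transform arising from the ratio of gamma factors in the AFE and for the $J$-Bessel transform in Voronoi, keeping track of transition ranges where the oscillation balances. A second delicate point is ensuring that the interpolation exponent in $R_4$ is genuinely better than what is obtained by trivial bounds in the relevant ranges of $\sigma_0$; this requires verifying that the range where the Rankin-Selberg average dominates is the range that actually controls the total off-diagonal. Combining the dyadic estimates and extracting the outer envelope of their contributions should then assemble the four terms $R_1,\ldots,R_4$.
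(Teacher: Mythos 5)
There is a genuine gap: your sketch never confronts the Gauss--sum terms coming from the dual sides of the two approximate functional equations, and these are exactly where the stated error terms $R_3$ and $R_4$ come from. Writing out the product of the two approximate functional equations gives four pieces; besides the ``plain'' piece with $\chi(n)\overline{\chi}(m)$ (your diagonal/off-diagonal analysis), there are pieces weighted by $\tau(\overline{\chi})$, by $\tau(\overline{\chi})\tau(\chi)^2$, and by $\tau(\chi)^2$. Averaging these over even primitive characters does not produce a congruence $n\equiv\pm m\ (\mathrm{mod}\ q)$; it produces additive twists $e(\pm nm/q)$, $e(\pm n\overline{m}/q)$, and, for the $\tau(\chi)^2$ piece, genuine Kloosterman sums $S(1,\pm mn;q)$. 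In the actual argument the $\tau(\chi)$-type pieces are handled by the Wilton bound $\sum_{n\le N}\lambda_f(n)e(\alpha n)\ll N^{1/2+\varepsilon}$ with partial summation (giving $R_3$), and the $\tau(\chi)^2$ piece is split dyadically in $m\asymp M$, $n\asymp N$ and treated by three different arguments (Weil's bound when both ranges are short; Poisson in $m$ followed by Wilton when $M$ is large; opening the Kloosterman sum and applying Voronoi in $n$, then Kim--Sarnak and Rankin--Selberg, when $N$ is large). Optimizing the two dyadic cutoff parameters by balancing the Poisson/Wilton bound against the Voronoi bound is what produces the exponent $\frac{\sigma_0(1-2\theta)}{2(1+\theta)}$ in $R_4$; it is not a H\"older interpolation between $\lambda_f(n)\ll n^{\theta}$ and the Rankin--Selberg second moment, so your proposed mechanism would not reproduce that exponent.

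By contrast, the term you spend most of your effort on --- the off-diagonal $n\equiv\pm m\ (\mathrm{mod}\ q)$, $n\neq\pm m$ in the plain piece, which you propose to treat as a shifted convolution via Voronoi, Kuznetsov, the spectral large sieve and subconvexity for Rankin--Selberg $L$-functions --- is not the bottleneck at all: bounding it trivially with $\lambda_f(n)\ll n^{\theta+\varepsilon}$ already yields $R_2$, which is dominated by $R_4$ at $\sigma_0=1/2$. So the heavy spectral machinery is both unnecessary and insufficient as described, since without a concrete treatment of the Gauss-sum pieces (Poisson, Voronoi with the congruence condition from the $a$-sum, Weil, Wilton, and the explicit choice of the parameters $\beta_1,\beta_2$) you cannot assemble the claimed list $R_1,\dots,R_4$. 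Also, a minor point: the main term here comes from the diagonal $n=\pm m$ of the plain piece, evaluated directly as $L(2\sigma_0,f)$ times $\phi(q)/2$; there is no $\zeta$-factor pole to pick up, contrary to what you describe.
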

Note that the first error term in (\ref{1.1}) can be in fact dominated by the third one and for $\theta=\frac{7}{64}$, the fifth error term in (\ref{1.1}) is absorbed by the last error term in (\ref{1.1}). So we can ignore the first and fifth error terms in (\ref{1.1}). $R_1$ is the second error term in (\ref{1.1}). For the remaining terms, $R_2$ is superior to the sum of the third and the sixth error terms in (\ref{1.1}) for $\theta=\frac{7}{64}$. Obviously,  $R_3$ is superior to the fourth error term in (\ref{1.1}). Moreover, to compare $R_4$ and the last error term in (\ref{1.1}), we let
\bna
g(\sigma_0)=\frac{3}{2}-\left(1+\frac{\sigma_0(1-2\theta)}{2(1+\theta)}\right)\sigma_0.
\ena
We compute
\bna
&&g(\sigma_0)-\left(\frac{87}{52}-\frac{87}{52}\sigma_0+\frac{37}{26}\theta\right)\notag\\
&=&-\frac{1-2\theta}{2(1+\theta)}\sigma_0^2+\frac{35}{52}\sigma_0-\frac{9}{52}-\frac{37}{26}\theta\notag\\
&=&-\frac{25}{71}\sigma_0^2+\frac{35}{52}\sigma_0-\frac{547}{1664}\notag\\
&=&-\frac{25}{71}\left(\sigma_0-\frac{497}{520}\right)^2+\frac{25}{71}\left(\frac{497}{520}\right)^2-\frac{547}{1664}.
\ena
Therefore, for $1/2\leq\sigma_0< 1$, $g(\sigma_0)-\left(\frac{87}{52}-\frac{87}{52}\sigma_0+\frac{37}{26}\theta\right)\leq\frac{25}{71}\left(\frac{497}{520}\right)^2-\frac{547}{1664}<0$. Thus $R_4$ is superior to the last error term in (\ref{1.1}) for $\theta=\frac{7}{64}$. Assembling the above argument, we find that Theorem 1 improves (\ref{1.1}). In particular, for $\sigma_0=\frac{1}{2}$, we have
\bea\label{1.4}
\sideset{}{^\dagger}\sum_{\chi \text(\bmod q) \atop \chi(-1)=1}
L\left(\frac{1}{2}+it_0,f\otimes\chi\right)\overline{L\left(\frac{1}{2}+it_0,\chi\right)}=\frac{q}{2}L(1,f)
+O\left(q^{\frac{7}{8}+\frac{3\theta}{8(1+\theta)}+\varepsilon}\tau^{\frac{15}{8}+\frac{3\theta}{8(1+\theta)}+\varepsilon}\right).
\eea

To prove Theorem 1, we follow closely Sun \cite{sun}, where the case $q$ is a product of two primes is considered. In \cite{sun}, Sun also claimed that the error term in (\ref{1.1r}) can be improved to
$O(q^{\frac{7}{8}+\frac{3\theta}{8(1+\theta)}+\varepsilon})$. Notice that (\ref{1.4}) implies Sun's result.

Let
\bna
\mathbf{M}(\sigma_0)=\max \left\{2(1-\sigma_0),\ \frac{3-3\sigma_0+2\theta}{3\sigma_0-1-2\theta}, \ \frac{1-\sigma_0}{\sigma_0}, \ \frac{5+5\theta-\sigma_0(2+2\theta+\sigma_0-2\sigma_0\theta)}{-1-\theta+\sigma_0(2+2\theta+\sigma_0-2\sigma_0\theta)}\right\}.
\ena
Here we call that $\theta=7/64$. It can be easily confirmed that the main term in (\ref{TH}) dominates the error terms if $q\gg_f \tau^\mathbf{{M}(\sigma_0)+\varepsilon}$. Moreover, we have
\bna
\mathbf{M}\left(\frac{1}{2}\right)=\max\left\{1, \ \frac{3+4\theta}{1-4\theta}, \ \frac{15+18\theta}{1-2\theta}\right\}=\frac{543}{25}.
\ena
Hence we have the following result.
\begin{corollary}
For a Hecke-Maass cusp form $f$ of $SL(2,\mathbb{Z})$, there exists a primitive Dirichlet character $\chi$ modulo $q$ for which $L(s_0, f\otimes\chi)$ and $L(s_0, \chi)$ do not vanish if the prime $q$ satisfies $q\gg_f \tau^{\mathbf{M}(\sigma_0)+\varepsilon}$. In particular, for any $t\in\mathbb{R}$, there exists a primitive Dirichlet character $\chi$ modulo $q$ for which $L(1/2+it, f\otimes\chi)$ and $L(1/2+it, \chi)$ do not vanish if the prime $q$ satisfies $q\gg_f (1+|t|)^{543/25+\varepsilon}$.
\end{corollary}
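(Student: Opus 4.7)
The plan is to deduce Corollary 1 as an immediate consequence of Theorem 1 combined with a pigeonhole argument. For $\sigma_0 \geq 1/2$ one has $2\sigma_0 \geq 1$, and by the classical nonvanishing of $L(s,f)$ on the line $\Re(s)=1$ (together with the Euler-product representation for $\Re(s)>1$), the leading constant $L(2\sigma_0,f)$ appearing in (\ref{TH}) is nonzero. Consequently, once each of the four error terms $R_1,\dots,R_4$ is of strictly smaller order than $q$, the full sum on the left-hand side of (\ref{TH}) is asymptotic to $\tfrac{q}{2}L(2\sigma_0,f)$ and hence is nonzero for $q$ large enough. A nonzero sum must contain at least one nonzero summand, so there exists an even primitive $\chi \pmod q$ for which $L(s_0,f\otimes\chi)\overline{L(s_0,\chi)} \neq 0$, which in turn forces both factors to be nonzero.

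It remains to extract, for each $i$, the smallest exponent $m_i$ such that $R_i \ll q^{1-\varepsilon}$ is guaranteed by $q \gg \tau^{m_i+\varepsilon}$. Writing each $R_i$ in the form $q^{a_i}\tau^{b_i+o(1)}$ with $a_i,b_i$ read directly off the statement of Theorem 1, the desired inequality rearranges to $q \gg \tau^{b_i/(1-a_i)+\varepsilon}$. A direct calculation yields $m_1 = 2(1-\sigma_0)$ from $R_1$, $m_2 = (3-3\sigma_0+2\theta)/(3\sigma_0-1-2\theta)$ from $R_2$, and $m_3 = (1-\sigma_0)/\sigma_0$ from $R_3$. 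For $R_4$, one first clears the denominator $2(1+\theta)$ inside the exponent, after which $m_4$ collapses to the fourth expression defining $\mathbf{M}(\sigma_0)$. Taking the maximum, the condition $q \gg_f \tau^{\mathbf{M}(\sigma_0)+\varepsilon}$ is therefore exactly the threshold at which the main term dominates, establishing the first assertion.

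For the second assertion, specializing to $\sigma_0 = 1/2$ with $\theta = 7/64$ collapses $m_1$ and $m_3$ to $1$, reduces $m_2$ to $(3+4\theta)/(1-4\theta) = 55/9$, and reduces $m_4$ to $(15+18\theta)/(1-2\theta) = 543/25$; the last of these is the largest. Since $\tau = |t_0|+3 \asymp 1+|t_0|$, the stated condition $q \gg_f (1+|t|)^{543/25+\varepsilon}$ follows. The deduction itself is routine given Theorem 1 — all the analytic difficulty resides in the improved error terms already proven there. The only nontrivial bookkeeping is the algebraic simplification of $m_4$, where the quadratic dependence on $\sigma_0$ inside $R_4$ must be handled carefully in order to confirm that, at $\sigma_0 = 1/2$, it is this fourth exponent rather than $m_2$ that provides the binding constraint.
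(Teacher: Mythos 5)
Your proposal is correct and follows essentially the same route as the paper: the main term $\tfrac{q}{2}L(2\sigma_0,f)$ dominates once $q\gg_f\tau^{\mathbf{M}(\sigma_0)+\varepsilon}$, and a nonzero sum forces a nonvanishing summand; your exponents $m_1=2(1-\sigma_0)$, $m_2=\frac{3-3\sigma_0+2\theta}{3\sigma_0-1-2\theta}$, $m_3=\frac{1-\sigma_0}{\sigma_0}$, $m_4=\frac{5+5\theta-\sigma_0(2+2\theta+\sigma_0-2\sigma_0\theta)}{-1-\theta+\sigma_0(2+2\theta+\sigma_0-2\sigma_0\theta)}$ and the specializations $55/9$ and $543/25$ at $\sigma_0=1/2$, $\theta=7/64$ all check out. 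Your explicit justification that $L(2\sigma_0,f)\neq 0$ (Euler product for $2\sigma_0>1$, nonvanishing on $\Re(s)=1$ when $\sigma_0=1/2$) is a detail the paper leaves implicit, but the argument is the same.
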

Corollary 1 improves the result of Sono \cite{sono} who proved that $L(1/2+it,f\otimes\chi)L(1/2+it,\chi)\neq0$ for some primitive Dirichlet character modulo $q$ for prime values $q$ satisfying $q\gg_f (1+|t|)^{255+\varepsilon}$.
\section{Preliminaries}
Let $\chi$ be an even primitive Dirichlet character modulo $q$. For $\Re(s)>1$, we define the Dirichlet $L$-function
\bna
L(s,\chi)=\sum_{n\geq 1}\chi(n)n^{-s},
\ena
which has analytic continuation to all $s\in \mathbb{C}$ and satisfies the functional equation
\bna
\Lambda(s,\chi)=\frac{\tau(\chi)}{\sqrt{q}}\Lambda(1-s,\overline{\chi}),
\ena
where
\bna
\Lambda(s,\chi)=\left(\frac{q}{\pi}\right)^{\frac{s}{2}}\Gamma\left(\frac{s}{2}\right)L(s,\chi),
\ena
and $\tau(\chi)=\sideset{}{^\ast}\sum\limits_{a (\text{{\rm mod }} q)}\chi(a) e(a/q)$ is the Gauss sum. Hereafter the notation $\sideset{}{^\ast}\sum\limits_{a (\text{{\rm mod }} q)}$ means $\sum\limits_{a (\text{{\rm mod }} q)\atop (a,q)=1 }$.

Let $f$ be an even Hecke-Maass cusp form for $SL(2,\mathbb{Z})$ with Laplace eigenvalue $\frac{1}{4}+T_f^2$, $T_f\in \mathbb{R}$. Let $\lambda_f(n)$ be the $n$-th Fourier coefficient of $f$. For $\Re(s)>1$, we define the Dirichlet twist of Hecke-Maass $L$-function
\bna
 L(s,f\otimes\chi)=\sum_{n\geq 1}\lambda_f(n)\chi(n)n^{-s},
\ena
which has analytic continuation to the whole complex plane and satisfies the function equation
\bna
 \Lambda(s,f\otimes\chi)=\frac{\tau(\chi)^2}{q}\Lambda(1-s,f\otimes\overline{\chi}),
\ena
where
\bna
\Lambda(s,f\otimes\chi)=\left
(\frac{q}{\pi}\right)^{s}\Gamma\left(\frac{s+iT_f}{2}\right)\Gamma\left(\frac{s-iT_f}{2}\right)L(s,f\otimes\chi).
\ena

To prove Theorem 1, we need the approximate functional equations for $L(s,\chi)$ and $L(s,f\otimes\varphi)$. We quote the follow results of Sono \cite{sono} (see Lemmas 2.6 and 2.7 in \cite{sono}).
\begin{lemma}
Let $G(u)=e^{u^{2}}$. For $s_0=\sigma_0+it_0$ with $1/2\leq \sigma_0<1$ and for $\chi$ an even primitive Dirichlet character of modulus $q$, we obtain
\bea\label{V}
L(s_0,\chi)=\sum_{n\geq 1}\frac{\chi(n)}{n^{s_0}}V_{s_0}\left(\frac{n}{\sqrt{q}}\right)+\tau(\chi)q^{-s_0}\frac{\gamma(1-s_0)}{\gamma(s_0)}
\sum_{n\geq 1}\frac{\overline{\chi}(n)}{n^{1-s_0}}V_{1-s_0}\left(\frac{n}{\sqrt{q}}\right),
\eea
where $\gamma(s):=\pi^{-s/2}\Gamma(s/2)$ and
\bna
V_{s_0}(x)=\frac{1}{2\pi i}\int_{(2)}(\sqrt{\pi}x)^{-u}\frac{\Gamma(\frac{s_0+u}{2})}{\Gamma(\frac{s_0}{2})}G(u)\frac{\mathrm{d}u}{u}.
\ena
The function $V_{s_0}(x)$ satisfies
\bna
&&V_{s_0}(x)\ll_A \min\{1,(x/\sqrt{\tau})^{-A}\}, \\
&&V^{(l)}_{s_0}(x)\ll x^{-l}.
\ena
The implied constants depend only on $\sigma_0$, $A$, and $l$.
\end{lemma}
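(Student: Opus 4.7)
The plan is to derive (\ref{V}) by a standard Mellin--contour argument and then to bound $V_{s_0}$ by shifting the defining contour. I first consider
\begin{equation*}
I=\frac{1}{2\pi i}\int_{(2)}\Lambda(s_0+u,\chi)G(u)\frac{du}{u},
\end{equation*}
where $\Lambda(s,\chi)=q^{s/2}\gamma(s)L(s,\chi)$ is the completed $L$-function. Shifting the contour from $\Re u=2$ to $\Re u=-2$ crosses only the simple pole of $1/u$ at $u=0$, which contributes the residue $\Lambda(s_0,\chi)G(0)=\Lambda(s_0,\chi)$. In the remaining integral I apply the functional equation $\Lambda(s_0+u,\chi)=\tau(\chi)q^{-1/2}\Lambda(1-s_0-u,\overline{\chi})$, substitute $u\mapsto -u$ (using that $G$ is even), and bring the contour back to $\Re u=2$, arriving at an identity of the form $\Lambda(s_0,\chi)=A+(\tau(\chi)/\sqrt{q})B$ with $A,B$ given by Mellin integrals on $\Re u=2$.

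On $\Re u=2$ both $\Re(s_0+u)>1$ and $\Re(1-s_0+u)>1$, so I expand $L(s_0+u,\chi)$ and $L(1-s_0+u,\overline{\chi})$ as absolutely convergent Dirichlet series, swap sum with integral, and identify the inner integrals as $V_{s_0}(n/\sqrt{q})$ and $V_{1-s_0}(n/\sqrt{q})$. Dividing through by $q^{s_0/2}\gamma(s_0)$, the archimedean factor on the dual side simplifies to $\tau(\chi)q^{-s_0}\gamma(1-s_0)/\gamma(s_0)$, producing (\ref{V}).

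For the bounds, I shift the defining contour to a line chosen according to $x$. The estimate $V_{s_0}(x)\ll 1$ follows by shifting to $\Re u=\varepsilon$ when $x\geq 1$, and to $\Re u=-\varepsilon$ (picking up the residue $G(0)=1$ at $u=0$) when $x<1$; on either line the integrand is controlled by Stirling's formula for the Gamma ratio together with the Gaussian decay $|G(u)|=e^{(\Re u)^2-(\Im u)^2}$. The estimate $V_{s_0}(x)\ll_A(x/\sqrt{\tau})^{-A}$ follows by shifting to $\Re u=A$: this produces the factor $(\sqrt{\pi}x)^{-A}$, while Stirling shows that $|\Gamma((s_0+u)/2)/\Gamma(s_0/2)|$ contributes at most $\tau^{A/2}$ on the effective support of the Gaussian. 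Finally, $V_{s_0}^{(l)}(x)\ll x^{-l}$ follows by differentiating under the integral sign: each $x$-derivative of $(\sqrt{\pi}x)^{-u}$ yields a factor $-u/x$, so after $l$ differentiations one obtains $x^{-l}$ times an integral whose polynomial factor $u(u+1)\cdots(u+l-1)$ is harmlessly absorbed by $G(u)=e^{u^2}$. The only delicate point is holding the Gamma-ratio estimate to $\tau^{A/2}$: the Gaussian $e^{-(\Im u)^2}$ effectively confines $|\Im u|$ to a range where $|(s_0+u)/2|\asymp\tau$ uniformly, and on this range Stirling is clean.
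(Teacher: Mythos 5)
The paper does not actually prove this lemma: it is quoted from Sono \cite{sono} (Lemmas 2.6 and 2.7 there), so there is no internal argument to compare with. Your proposal reconstructs the standard proof, which is essentially what the cited source (and Iwaniec--Kowalski, Theorem 5.3) does: integrate $\Lambda(s_0+u,\chi)G(u)\,du/u$ over $\Re u=2$, shift to $\Re u=-2$ picking up the residue $\Lambda(s_0,\chi)$, apply the functional equation and reflect $u\mapsto-u$, then expand both Dirichlet series on $\Re u=2$; your bookkeeping is right, and dividing by $q^{s_0/2}\gamma(s_0)$ does produce the factor $\tau(\chi)q^{-s_0}\gamma(1-s_0)/\gamma(s_0)$. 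One point needs tightening in the bounds, because the lemma asserts constants depending only on $\sigma_0$, $A$, $l$ (so uniform in $\tau$). For $V_{s_0}(x)\ll1$ you split at $x=1$ and shift to $\Re u=\varepsilon$ for $x\geq1$; since the Gamma ratio on that line has size about $\tau^{\varepsilon/2}$ near $\Im u=0$, this only gives $\ll(\sqrt{\tau}/x)^{\varepsilon}$, which is not $O(1)$ uniformly in $\tau$ on the range $1\leq x\leq\sqrt{\tau}$. The fix is already contained in your argument: the left shift to $\Re u=-\delta$ (residue $1$ plus an error $\ll(x/\sqrt{\tau})^{\delta}$, with $0<\delta<\sigma_0$ to avoid the pole of $\Gamma((s_0+u)/2)$) is valid for all $x\leq\sqrt{\tau}$, and the shift to $\Re u=A$ handles $x\geq\sqrt{\tau}$; in other words the natural splitting point is $x\asymp\sqrt{\tau}$, not $x\asymp1$. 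Similarly, for $V_{s_0}^{(l)}(x)\ll x^{-l}$ you should say where the contour sits: after differentiating, the factor $1/u$ is cancelled by $u(u+1)\cdots(u+l-1)$, so the integrand is entire and the contour can be placed at $\Re u=0$, where the Gamma ratio integrated against $e^{-(\Im u)^2}$ is $O(1)$; staying on $\Re u=2$ would only give $x^{-l-2}\tau$, which is weaker than the claim for $x\leq\sqrt{\tau}$. With these two adjustments your argument gives exactly the statement of the lemma.
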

\begin{lemma}
Let $f$ be an even Hecke-Maass form for $SL(2,\mathbb{Z})$. For $s_0=\sigma_0+it_0$ with $1/2\leq \sigma_0<1$ and for $\chi$ an even primitive Dirichlet character of modulus $q$, we obtain
\bea\label{W}
L(s_0,f\otimes\chi)&=&\sum_{n\geq 1}\frac{\lambda_f(n)\chi(n)}{n^{s_0}}W_{s_0}\left(\frac{n}{q}\right)\notag\\
&+&
\tau(\chi)^{2}q^{-2s_0}\frac{\widetilde{\gamma}(1-s_0)}{\widetilde{\gamma}(s_0)}
\sum_{n\geq 1}\frac{\lambda_f(n)\overline{\chi}(n)}{n^{1-s_0}}W_{1-s_0}\left(\frac{n}{q}\right),
\eea
where ${\widetilde{\gamma}(s)}:=\pi^{-s}\Gamma\left(\frac{s+iT_f}{2}\right)\Gamma\left(\frac{s-iT_f}{2}\right)$ and
\bna
W_{s_0}(x)=\frac{1}{2\pi i}\int_{(2)}
(\pi x)^{-u}\frac{\Gamma(\frac{s_0+u+iT_f}{2})\Gamma(\frac{s_0+u-iT_f}{2})}{\Gamma(\frac{s_0+iT_f}{2})\Gamma(\frac{s_0-iT_f}{2})}G(u)\frac{\mathrm{d}u}{u}.
\ena
The function $W_{s_0}(x)$ satisfies
\bna
&&W_{s_0}(x)\ll_A \min\{1,(x/\tau)^{-A}\}, \\
&&W^{(l)}_{s_0}(x)\ll x^{-l}.
\ena
The implied constants depend only on $\sigma_0$, $A$, and $l$.
\end{lemma}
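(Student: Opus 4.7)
The plan is to derive (\ref{W}) by the standard Mellin--Barnes contour argument applied to the completed twisted $L$-function $\Lambda(s,f\otimes\chi)$. Noting that
\bna
W_{s_0}(x)= \frac{1}{2\pi i}\int_{(2)} x^{-u}\,\frac{\widetilde{\gamma}(s_0+u)}{\widetilde{\gamma}(s_0)}\,G(u)\,\frac{\mathrm{d}u}{u},
\ena
I would produce the first sum in (\ref{W}) by expanding $L(s_0+u,f\otimes\chi)$ as a Dirichlet series on the line $\Re(u)=2$ and interchanging the summation with the integral, obtaining
\bna
\sum_{n\ge1}\frac{\lambda_f(n)\chi(n)}{n^{s_0}}W_{s_0}\!\left(\frac{n}{q}\right)
=\frac{q^{-s_0}}{\widetilde{\gamma}(s_0)}\cdot\frac{1}{2\pi i}\int_{(2)} \Lambda(s_0+u,f\otimes\chi)\,G(u)\,\frac{\mathrm{d}u}{u}.
\ena

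I would then shift the contour from $\Re(u)=2$ to $\Re(u)=-2$. Because $f\otimes\chi$ is cuspidal, $\Lambda(\cdot,f\otimes\chi)$ is entire, so the only pole crossed is the simple pole of $1/u$ at $u=0$, whose residue is $\Lambda(s_0,f\otimes\chi)$ and contributes exactly $L(s_0,f\otimes\chi)$ after applying the prefactor $q^{-s_0}/\widetilde{\gamma}(s_0)$; the horizontal segments of the contour vanish thanks to the super-exponential decay of $G(u)=e^{u^{2}}$ in $|\Im u|$. In the shifted integral I would change variables $u\mapsto -u$, invoke the functional equation $\Lambda(s_0-u,f\otimes\chi)=(\tau(\chi)^{2}/q)\Lambda(1-s_0+u,f\otimes\overline{\chi})$, and use $G(-u)=G(u)$. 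Expanding $L(1-s_0+u,f\otimes\overline{\chi})$ as a Dirichlet series and interchanging once more produces the dual sum with $W_{1-s_0}(n/q)$, while collecting the accumulated prefactors $q^{-s_0}/\widetilde{\gamma}(s_0)$, $\tau(\chi)^{2}/q$, and $q^{1-s_0}\widetilde{\gamma}(1-s_0)$ yields exactly the claimed coefficient $\tau(\chi)^{2} q^{-2s_0}\widetilde{\gamma}(1-s_0)/\widetilde{\gamma}(s_0)$.

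For the analytic bounds on $W_{s_0}$, the decay estimate $W_{s_0}(x)\ll_A(x/\tau)^{-A}$ follows by shifting the defining contour to $\Re(u)=A$ for any large $A>0$: no pole is crossed (the first poles of the numerator Gammas lie at $\Re(u)=-\sigma_0$), one gains the factor $(\pi x)^{-A}$, and Stirling bounds the Gamma ratio on the new line by $\tau^{A/2}(\tau+|\Im u|)^{A/2}$, all polynomial growth being absorbed by $G(u)$. The uniform bound $W_{s_0}(x)\ll 1$ is instead obtained by shifting slightly past $u=0$, say to $\Re(u)=-\sigma_0/2$, where one picks up the residue $1$ at $u=0$ (since $G(0)=1$ and the Gamma ratio equals $1$ there); on the new line $|(\pi x)^{-u}|=(\pi x)^{\sigma_0/2}$ stays bounded for $x\ll1$ and the remaining integrand is integrable. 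The derivative estimate $W_{s_0}^{(l)}(x)\ll x^{-l}$ comes from differentiating under the integral: each $\mathrm{d}/\mathrm{d}x$ brings down $-u/x$, the first such factor cancels the $1/u$ in the measure, and subsequent powers of $u$ are damped by $G(u)$, leaving an absolutely convergent integral times $x^{-l}$. The one delicate point in this otherwise routine argument is tracking the $\tau$-dependence in Stirling's formula carefully so that the implied constants depend only on $\sigma_0$, $A$, and $l$, as asserted in the statement.
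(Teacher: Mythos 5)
The paper itself gives no proof of this lemma: it is quoted directly from Sono \cite{sono} (Lemmas 2.6 and 2.7 there), so there is no internal argument to compare with, and what you have done is reconstruct the standard Mellin--Barnes derivation that underlies the cited result. Your treatment of the identity (\ref{W}) is correct: the rewriting $W_{s_0}(x)=\frac{1}{2\pi i}\int_{(2)}x^{-u}\frac{\widetilde{\gamma}(s_0+u)}{\widetilde{\gamma}(s_0)}G(u)\frac{\mathrm{d}u}{u}$ is right, the residue at $u=0$ gives $L(s_0,f\otimes\chi)$ after the prefactor $q^{-s_0}/\widetilde{\gamma}(s_0)$, and the factors $q^{-s_0}/\widetilde{\gamma}(s_0)$, $\tau(\chi)^{2}/q$ and $q^{1-s_0}\widetilde{\gamma}(1-s_0)$ do combine to $\tau(\chi)^{2}q^{-2s_0}\widetilde{\gamma}(1-s_0)/\widetilde{\gamma}(s_0)$, with the minus sign from the orientation reversal under $u\mapsto-u$ landing on the correct side of the equation. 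One small omission: to justify discarding the horizontal segments you need not only that $\Lambda(\cdot,f\otimes\chi)$ is entire but also that it has polynomial growth in the strip (finite order/Phragm\'en--Lindel\"of), after which $G(u)=e^{u^{2}}$ does the rest.

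Two of the estimates need tightening as written. First, the bound $W_{s_0}(x)\ll 1$ must hold throughout the range $x\le\tau$ (only for $x>\tau$ does the $(x/\tau)^{-A}$ branch of the min take over), not merely for $x\ll1$: on the line $\Re(u)=-\sigma_0/2$ the factor $|(\pi x)^{-u}|=(\pi x)^{\sigma_0/2}$ can be as large as $\tau^{\sigma_0/2}$, so you must invoke the Stirling saving $\asymp\tau^{-\sigma_0/2}$ supplied by the Gamma ratio on that line to conclude $W_{s_0}(x)=1+O\bigl((x/\tau)^{\sigma_0/2}\bigr)$; this is exactly the $\tau$-bookkeeping you defer in your last sentence, and it should be carried out rather than flagged, since the paper uses the min-bound with $x$ as large as $\tau$. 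Second, for $W^{(l)}_{s_0}(x)\ll x^{-l}$ with constants independent of $\tau$, differentiating under the integral and staying on $\Re(u)=2$ is not enough (that leaves an extra factor $x^{-2}$ together with a power of $\tau$ from the Gamma ratio); you must shift the differentiated integral to $\Re(u)=0$, which is precisely what the cancellation of the $1/u$ by the factor $u$ permits, and there the Gamma ratio times $e^{u^{2}}$ is integrable uniformly in $\tau$. With these two points made explicit, your proof is complete and coincides with the standard treatment behind the quoted lemma.
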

We need the Rankin-Selberg estimate (see Proposition 19.6 in Duke, Friedlander and Iwaniec \cite{duk2}).
\begin{lemma}\label{1}
For any $\varepsilon> 0$, we have
\bna
\sum_{n\leq x}|\lambda_f(n)|^2\ll_{f,\varepsilon} x^{1+\varepsilon}.
\ena
\end{lemma}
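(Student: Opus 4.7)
The plan is to deduce the bound from the Rankin--Selberg theory of the $L$-function $L(s, f \otimes f)$. For a Hecke--Maass cusp form $f$ on $SL(2, \mathbb{Z})$, the Hecke eigenvalues $\lambda_f(n)$ are real and multiplicative, and the standard Rankin--Selberg identity gives, for $\Re(s) > 1$,
\[
\sum_{n \geq 1} \frac{|\lambda_f(n)|^2}{n^s} = \frac{L(s, f \otimes f)}{\zeta(2s)},
\]
where $L(s, f \otimes f)$ admits meromorphic continuation to $\mathbb{C}$, is holomorphic except for a simple pole at $s = 1$ with residue a positive constant depending only on $f$, satisfies a functional equation relating $s \leftrightarrow 1 - s$, and is of polynomial growth in $|\Im(s)|$ on vertical strips. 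Together with the standard bounds for $\zeta(2s)$ on $\Re(s) = 1 + \varepsilon$, this tells us that the Dirichlet series on the left is absolutely convergent for $\Re(s) > 1$ and admits a meromorphic continuation to a neighbourhood of $\Re(s) \geq 1$ whose only singularity there is a simple pole at $s = 1$.

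With these analytic facts in hand, I would apply the truncated Perron formula at $c = 1 + 1/\log x$ to express
\[
\sum_{n \leq x} |\lambda_f(n)|^2 = \frac{1}{2\pi i} \int_{c - iT}^{c + iT} \frac{L(s, f \otimes f)}{\zeta(2s)} \frac{x^s}{s} \, ds + E(x, T),
\]
where $E(x,T)$ is the standard Perron error term, controlled by the size of $|\lambda_f(n)|^2$ on the edge $n \asymp x$ and by the choice of $T$. A trivial estimation of the contour integral on the vertical line $\Re(s) = 1 + \varepsilon$, using the absolute convergence of the Dirichlet series there, immediately yields the claimed bound $O_{f,\varepsilon}(x^{1+\varepsilon})$. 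In fact one could shift the contour past the pole at $s=1$, pick up the residue as a main term of size $c_f x$, and use the polynomial growth of $L(s,f\otimes f)$ together with lower bounds for $\zeta(2s)$ near $\Re(s)=1$ to obtain the sharper asymptotic $\sum_{n \leq x} |\lambda_f(n)|^2 = c_f\, x + O(x^{1-\delta})$, but the weaker $x^{1+\varepsilon}$ bound is all that is needed.

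The only nontrivial input is the Rankin--Selberg theory for $L(s, f \otimes f)$, namely its meromorphic continuation, the location and order of its pole, and its growth on vertical strips. For a Hecke--Maass form on the full modular group these are classical results going back to Rankin and Selberg, obtained by unfolding a real-analytic Eisenstein series against $|f(z)|^2$ and reading off the analytic structure from that of the Eisenstein series. Granted this classical input, the rest of the argument is a routine contour-shift/Perron calculation, and there is no serious analytic obstacle to overcome; the lemma can equivalently just be quoted from Proposition~19.6 of \cite{duk2}.
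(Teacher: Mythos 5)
Your proposal is correct: the paper itself gives no proof of this lemma, simply quoting it as Proposition~19.6 of Duke--Friedlander--Iwaniec \cite{duk2}, and your Rankin--Selberg sketch (continuation of $L(s,f\otimes f)/\zeta(2s)$ plus Perron) is exactly the classical argument underlying that citation. One small remark: since the coefficients $|\lambda_f(n)|^2$ are nonnegative, the $x^{1+\varepsilon}$ bound already follows from absolute convergence of the Dirichlet series at $s=1+\varepsilon$ via $\sum_{n\leq x}|\lambda_f(n)|^2\leq x^{1+\varepsilon}\sum_{n\geq 1}|\lambda_f(n)|^2 n^{-1-\varepsilon}$, so the Perron/contour-shift machinery (and the attendant control of the truncation error) is not needed unless you want the sharper asymptotic $c_f x+O(x^{1-\delta})$.
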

Moreover, we also need the Wilton-type bound (see Iwaniec \cite{ikk}, Theorem 8.1).
\begin{lemma}\label{e}
For any $\alpha\in \mathbb{R}$, we have
\bna\label{e_1}
\sum_{n\leq N}\lambda_f(n) e(\alpha n)\ll_{f,\varepsilon} N^{1/2+\varepsilon},
\ena
uniformly in $\alpha$.
\end{lemma}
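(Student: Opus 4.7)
The plan is to combine Dirichlet's rational approximation to $\alpha$ with the Voronoi summation formula for Hecke--Maass cusp forms, closed by the Rankin--Selberg estimate already recorded as Lemma~\ref{1}. First I would insert a smooth dyadic partition of unity and reduce to proving, for every dyadic scale $M\le N$,
\[
S_M := \sum_n \lambda_f(n)\, e(\alpha n)\, \psi(n/M)\ll_{f,\varepsilon} M^{1/2+\varepsilon},
\]
with $\psi$ a fixed bump supported on $[1/2,2]$. Passing from the sharp cutoff $n\le N$ to the smooth cutoffs costs only an $N^{\varepsilon}$ factor via a standard Mellin contour-shift, and the dyadic summation in $M$ contributes only a logarithm.

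Next I would apply Dirichlet's approximation theorem to pick $a,q$ with $(a,q)=1$, $1\le q\le Q$ and $|\alpha-a/q|\le 1/(qQ)$, where $Q$ is a parameter to be optimised (of order $M^{1/2}$). Writing $\alpha=a/q+\beta$, the twist factors as $e(\alpha n)=e(an/q)\,e(\beta n)$, so Voronoi summation can dualise the arithmetic twist $e(an/q)$ while $e(\beta n)$ is absorbed into the modified test function $h(x):=\psi(x/M)\,e(\beta x)$.

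The Voronoi summation formula for Maass forms then converts $S_M$ into
\[
S_M = \frac{1}{q}\sum_{\pm}\sum_{m\ge 1}\lambda_f(m)\, e(\mp\overline{a}\,m/q)\, H_\pm\!\left(\frac{m}{q^2}\right),
\]
where $\overline{a}\,a\equiv 1\pmod q$ and each $H_\pm$ is a Bessel-type integral transform of $h$. An integration-by-parts / stationary-phase analysis of $H_\pm$, exploiting the interaction of $e(\beta x)$ with the Bessel kernels, localises the dual variable $m$ to a range of the shape $m\ll q^2 M^{-1}\bigl(1+(|\beta|M)^2\bigr)^{1+\varepsilon}$ up to a negligible tail, and controls the pointwise size of $H_\pm$ on that range.

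Finally, I would estimate the dualised sum by Cauchy--Schwarz together with Lemma~\ref{1}, and optimise $Q\asymp M^{1/2}$ to balance the prefactor $1/q$ against the length of the dual sum; this yields $S_M\ll_{f,\varepsilon} M^{1/2+\varepsilon}$ uniformly in $\alpha$, and hence the lemma. The main technical obstacle is the case-split in the stationary-phase analysis of $H_\pm$ according to whether $|\beta|M$ dominates the spectral parameter $T_f$ or not, together with the careful bookkeeping of the Bessel amplitude near the transition between the two regimes; since $f$ is fixed throughout, however, $T_f$ enters only as a constant and the optimisation over $Q$ becomes straightforward.
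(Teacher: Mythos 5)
You should know at the outset that the paper does not prove this lemma at all: it is quoted as a known Wilton-type bound from Iwaniec, \emph{Spectral Methods of Automorphic Forms}, Theorem 8.1, where the estimate comes directly from automorphy --- one reads the partial sums off the Fourier--Whittaker expansion of $f$ along a horocycle of height about $1/N$, and the uniformity in $\alpha$ is immediate from the boundedness of the Maass form on the upper half-plane, with no rational approximation or summation formula needed. Your route (dyadic smoothing, Dirichlet approximation $\alpha=a/q+\beta$ with $q\le Q\asymp M^{1/2}$ and $|\beta|\le 1/(qQ)$, Voronoi summation modulo $q$ with $e(\beta x)$ absorbed into the weight, stationary phase, then summation of the dual terms) is therefore a genuinely different argument, and it can be completed: your localization $m\ll q^{2}M^{-1}(1+(|\beta|M)^{2})M^{\varepsilon}$ does collapse to $m\ll M^{\varepsilon}$ once $Q\asymp M^{1/2}$, since $q\le Q$ and $|\beta|M\le M/(qQ)$. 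Two caveats about your closing step, though. The saving is not obtained by ``balancing $1/q$ against the length of the dual sum'': the length is already $O(M^{\varepsilon})$, and what you actually have to prove is the pointwise transform bound $H_{\pm}(m/q^{2})\ll_{f} q\,M^{1/2}m^{-1/2+\varepsilon}$ uniformly in $1\le q\le M^{1/2}$, which comes from the Bessel amplitude of size roughly $q^{1/2}(Mm)^{-1/4}$ times the stationary-phase factor of size roughly $q^{1/2}M^{3/4}m^{-1/4}$; the factor $q$ then cancels the $1/q$ prefactor for every $q$, the delicate case being small $q$ with $|\beta|$ near the edge $1/(qQ)$, where a genuine stationary point occurs (and the range $\sqrt{Mm}\ll q$, where the Bessel asymptotics fail, only arises for $q$ near $M^{1/2}$ and is handled trivially). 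Consequently Cauchy--Schwarz with Lemma \ref{1} is superfluous: the individual bound $\lambda_f(m)\ll_f m^{1/2+\varepsilon}$ over $O(M^{\varepsilon})$ surviving frequencies suffices. In comparison, the quoted proof of Iwaniec is much shorter and gives uniformity in $\alpha$ for free, while your argument is heavier but more flexible --- it is essentially the dualization template the paper itself deploys in the estimation of $S_4$ --- at the cost of a stationary-phase analysis that must be carried out uniformly in $q$ and $\beta$.
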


The following Voronoi formula can be found in Miller and Schmid \cite{Misch} (see also Godber \cite{god}, Theorem 3.2).
\begin{lemma}\label{Vor}
Let $\psi$ be a fixed smooth function with compact support on $\mathbb{R^{+}}$, $c$ be a positive integer and $d$, $\overline{d}\in \mathbb{Z} $ with $(c,d)=1$ and $d\overline{d}\equiv 1(\bmod c)$. Then we have
\bna
\sum_{n\geq 1}\lambda_f(n) e\left(\frac{n\overline{d}}{c}\right)\psi\left(\frac{n}{N}\right)
=c\sum_\pm\sum_{n\geq 1}\frac{\lambda_f(n)}{n}e\left(\pm\frac{nd}{c}\right)\Psi^{\pm}\left(\frac{n N}{c^{2}}\right),
\ena
where for $\sigma > -1$,
\bna
\Psi^{\pm}(x)=\frac{1}{2\pi i}\int_{(\sigma)}(\pi^{2}x)^{-s}G^{\pm}(s){\widetilde{\psi}}(s)\mathrm{d}s.
\ena

Here ${\widetilde{\psi}}(s)=\int_{0}^{\infty}\psi(x)x^{s-1}dx$ is the Mellin transform of $\psi(x)$ and
\bea\label{G}
2\pi G^{\pm}(s)=\frac{\Gamma(\frac{1+s+iT_f}{2})\Gamma(\frac{1+s-iT_f}{2})}
{\Gamma(\frac{-s+iT_f}{2})\Gamma(\frac{-s-iT_f}{2})}\pm
\frac{\Gamma(\frac{1+s+iT_f+1}{2})\Gamma(\frac{1+s+iT_f+1}{2})}
{\Gamma(\frac{-s+iT_f+1}{2})\Gamma(\frac{-s-iT_f+1}{2})}.
\eea
\end{lemma}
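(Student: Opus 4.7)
The plan is to derive the Voronoi formula by combining Mellin inversion with the functional equation of $L(s,f\otimes\chi)$ stated in Section 2. First, apply Mellin inversion $\psi(x)=\frac{1}{2\pi i}\int_{(\sigma_1)}\widetilde{\psi}(s)x^{-s}ds$ to the smooth weight to turn the target sum into
\bna
\sum_{n\geq 1}\lambda_f(n)e\left(\frac{n\overline{d}}{c}\right)\psi(n/N)
=\frac{1}{2\pi i}\int_{(\sigma_1)}N^{s}\widetilde{\psi}(s)L_f(s,\overline{d}/c)\,ds,
\ena
where $L_f(s,\overline{d}/c)=\sum_{n}\lambda_f(n)e(n\overline{d}/c)n^{-s}$ converges absolutely for $\sigma_1$ large. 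I would then decompose the additive character via Dirichlet characters modulo $c$: for $(n,c)=1$ one has
\bna
e(n\overline{d}/c)=\frac{1}{\phi(c)}\sum_{\chi\bmod c}\chi(\overline{d})\tau(\overline{\chi})\chi(n),
\ena
which recasts $L_f(s,\overline{d}/c)$ as a linear combination of $L(s,f\otimes\chi)$ over $\chi\bmod c$, with bounded correction terms accounting for the divisibility $(n,c)>1$ and for imprimitive $\chi$.

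Next, I would apply the functional equation $\Lambda(s,f\otimes\chi)=\frac{\tau(\chi)^{2}}{c}\Lambda(1-s,f\otimes\overline{\chi})$ inside the integral for each $\chi$, and shift the contour from $\Re(s)=\sigma_1$ down to $\Re(s)=-\sigma$ with $\sigma>-1$. No residues appear because each $L(s,f\otimes\chi)$ is entire and $\widetilde{\psi}$ has rapid vertical decay. On the dual side the Dirichlet series $\sum_n \lambda_f(n)\overline{\chi}(n)n^{-(1-s)}$ emerges, weighted by a ratio of gamma factors $\widetilde{\gamma}(1-s)/\widetilde{\gamma}(s)$. Resumming the character factor $\chi(\overline{d})\tau(\overline{\chi})\overline{\chi}(n)$ over $\chi\bmod c$ collapses back into an additive character; the parity of $\chi$ (even vs. odd) controls the sign and accounts for the two summands in the definition of $G^{\pm}(s)$.

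Finally, the gamma ratio produced by $\widetilde{\gamma}(1-s)/\widetilde{\gamma}(s)$, combined with the standard reflection and duplication identities for $\Gamma$, identifies with the kernel $G^{\pm}(s)$ in (\ref{G}), and the remaining outer $\widetilde{\psi}(s)$-integral recognizes itself as $\Psi^{\pm}(nN/c^{2})$ after the change of variable $s\mapsto -s$, yielding exactly the claimed formula. The main obstacle is the careful bookkeeping in the character-decomposition step when $(n,c)>1$ contributions and imprimitive characters must be reconciled, together with the precise gamma-function manipulations that separate the even and odd kernels $G^{+}$ and $G^{-}$. An alternative, more conceptual derivation is the automorphic-distributions approach of Miller--Schmid, which bypasses the character decomposition and is the route followed in the cited reference.
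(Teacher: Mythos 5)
Note first that the paper does not prove this lemma at all: it is quoted verbatim from Miller--Schmid (and Godber), so the only question is whether your sketch would stand on its own as a proof. As written it does not, and the gap is concrete. The Voronoi formula is an \emph{exact identity} for every positive integer $c$, so your phrase ``with bounded correction terms accounting for the divisibility $(n,c)>1$ and for imprimitive $\chi$'' is not a permissible step: those contributions must be computed exactly, and they are precisely where this route becomes painful. For composite $c$ an imprimitive character $\chi$ mod $c$ is induced by a primitive $\chi^*$ of conductor $c^*\mid c$; its functional equation involves $\tau(\chi^*)$ and the conductor $c^*$ (and extra Euler factors at primes dividing $c/c^*$), not $\tau(\chi)$ and $c$, so the Gauss-sum bookkeeping that is supposed to collapse the $\chi$-sum back into $c\,e(\pm nd/c)$ simply does not go through term by term; the principal character alone contributes a main-size term whose dual side must be reassembled using Hecke relations for $\lambda_f(n)$ with $(n,c)>1$. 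In addition, the only functional equation available in the paper (Section 2) is for \emph{even primitive} $\chi$ modulo a \emph{prime} $q$; your decomposition unavoidably involves odd characters (whose gamma factors carry the shift by $1$ and are exactly the source of the second term in $G^{\pm}$, i.e.\ of the $+$/$-$ split you attribute to parity) and, for general $c$, imprimitive ones, none of which are covered by the inputs you invoke.

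If $c$ is prime and one is willing to supply the functional equations for both parities and to treat the principal character and the $c\mid n$ terms exactly, your character-decomposition argument can be pushed through, and it would suffice for the application in this paper where $c=q$ is prime; but the lemma as stated is for arbitrary $c$, and for that the standard complete proofs either work directly with the additively twisted $L$-function $\sum_n\lambda_f(n)e(n\overline{d}/c)n^{-s}$ and its functional equation coming from the modular transformation $x\mapsto \overline{d}/c-1/(c^2x)$ (Godber's route), or use the automorphic-distribution formalism of Miller--Schmid, which is the reference the paper actually relies on. So either restrict the claim to prime $c$ and carry out the omitted exact computations, or replace the sketch by one of these two complete arguments.
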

\begin{lemma}\label{q}
If $\chi \ (\bmod \ q) $ is an even primitive character, then
\bea\label{q_1}
\tau(\overline{\chi})\tau(\chi)=q.
\eea
\begin{proof}
By (3.15) of Iwaniec and Kowalski \cite{ik}, $\tau(\chi)\tau(\overline{\chi})=\chi(-1)q$. Then (\ref{q_1})
follows since $\chi$ is even.
\end{proof}
\end{lemma}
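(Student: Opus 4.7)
The statement is a standard Gauss-sum identity, so the plan is to reduce it to the well-known facts $|\tau(\chi)|^2=q$ and $\tau(\overline{\chi})=\chi(-1)\overline{\tau(\chi)}$, both valid for primitive $\chi$. I would either invoke these directly from a reference such as Iwaniec--Kowalski or give a short self-contained derivation, and then apply the evenness hypothesis $\chi(-1)=1$ to conclude.

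For a self-contained argument, I would first establish the conjugation formula. Starting from the definition,
\bna
\tau(\overline{\chi})=\sideset{}{^\ast}\sum_{a\,(\bmod\,q)}\overline{\chi}(a)\,e(a/q),
\ena
I substitute $a\mapsto -a$ (a bijection on the reduced residues modulo $q$) to obtain
\bna
\tau(\overline{\chi})=\overline{\chi}(-1)\sideset{}{^\ast}\sum_{a\,(\bmod\,q)}\overline{\chi}(a)\,e(-a/q)=\chi(-1)\,\overline{\tau(\chi)},
\ena
where in the last step I use that for a primitive character $\sum_{a}^{\ast}\overline{\chi}(a)e(-a/q)$ equals $\overline{\tau(\chi)}$ (this in turn uses the standard multiplicativity $\chi(m)\tau(\overline{\chi})=\sum_a\overline{\chi}(a)e(ma/q)$ valid for primitive $\chi$ and all $m$, specialized at $m=-1$ after conjugation). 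Next I would recall $|\tau(\chi)|^2=q$ for primitive $\chi$, which is the classical orthogonality calculation expanding $|\tau(\chi)|^2$ as a double sum and carrying out the inner sum over additive characters.

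Combining these two facts gives
\bna
\tau(\chi)\tau(\overline{\chi})=\tau(\chi)\cdot\chi(-1)\overline{\tau(\chi)}=\chi(-1)|\tau(\chi)|^2=\chi(-1)q,
\ena
and the evenness assumption $\chi(-1)=1$ yields the claimed equality $\tau(\overline{\chi})\tau(\chi)=q$. There is no real obstacle here: the only subtlety is ensuring that the conjugation identity $\tau(\overline{\chi})=\chi(-1)\overline{\tau(\chi)}$ is applied with $\chi$ \emph{primitive}, which is exactly the hypothesis of the lemma, so no additional work is required.
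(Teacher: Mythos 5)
Your proposal is correct and amounts to the same argument as the paper, which simply cites the identity $\tau(\chi)\tau(\overline{\chi})=\chi(-1)q$ from Iwaniec--Kowalski (their (3.15)) and then uses $\chi(-1)=1$; your self-contained derivation via $\tau(\overline{\chi})=\chi(-1)\overline{\tau(\chi)}$ and $|\tau(\chi)|^2=q$ for primitive $\chi$ is just an unpacking of that cited fact.
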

\section{ Proof of Theorem 1}
\setcounter{equation}{0}
\medskip
 Applying the approximate functional equations in (\ref{V}) and (\ref{W}), we have
\bea\label{sum}
&&\sideset{}{^\dagger}\sum_{\chi (\bmod q) \atop \chi(-1)=1}
L(s_0,f\otimes\chi)\overline{L(s_0,\chi)}\notag\\
&=&\sideset{}{^\dagger}\sum_{\chi (\text{{\rm mod }} q) \atop \chi(-1)=1}
\left[\sum_{n=1}^{\infty}\frac{\lambda_f(n)\chi(n)}{n^{s_0}}W_{s_0}\left(\frac{n}{q}\right)
+\tau(\chi)^{2}q^{-2s_0}\frac{\widetilde{\gamma}(1-s_0)}{\widetilde{\gamma}(s_0)}
\sum_{n=1}^{\infty}\frac{\lambda_f(n)\overline{\chi}(n)}{n^{1-s_0}}W_{1-s_0}\left(\frac{n}{q}\right)\right]\notag\\
&\times&\left[\sum_{m=1}^{\infty}\frac{\overline{\chi}(m)}{m^{\overline{s_0}}}V_{\overline{s_0}}\left(\frac{m}{\sqrt{q}}\right)+
\tau(\overline{\chi})q^{-\overline{s_0}}\frac{\gamma(1-\overline{s_0})}{\gamma(\overline{s_0})}
\sum_{m=1}^{\infty}\frac{\chi(m)}{m^{1-\overline{s_0}}}V_{1-\overline{s_0}}\left(\frac{m}{\sqrt{q}}\right)\right]\notag\\
&:=&S_1+S_2+S_3+S_4,
\eea
where
\bea S_1&=&\sum_{m=1}^{\infty}\frac{1}{m^{\overline{s_0}}}V_{\overline{s_0}}\left(\frac{m}{\sqrt{q}}\right)
\sum_{n=1}^{\infty}\frac{\lambda_f(n)}{n^{s_0}}W_{s_0}\left(\frac{n}{q}\right)
\sideset{}{^\dagger}\sum_{\chi(\bmod q) \atop \chi(-1)=1}\overline{\chi}(m)\chi(n),\notag\\
S_2&=&q^{-\overline{s_0}}\frac{\gamma(1-\overline{s_0})}{\gamma(\overline{s_0})}\sum_{m=1}^{\infty}\frac{1}{m^{1-\overline{s_0}}}V_{1-\overline{s_0}}\left(\frac{m}{\sqrt{q}}\right)
\sum_{n=1}^{\infty}\frac{\lambda_f(n)}{n^{s_0}}W_{s_0}\left(\frac{n}{q}\right)
\sideset{}{^\dagger}\sum_{\chi (\text{{\rm mod }} q) \atop \chi(-1)=1}\chi(m)\chi(n)\tau(\overline{\chi}),\notag\\
S_3&=&q^{-2s_0-\overline{s_0}}\frac{\gamma(1-\overline{s_0})\widetilde{\gamma}(1-s_0)}{\gamma(\overline{s_0})\widetilde{\gamma}(s_0)}
\sum_{m=1}^{\infty}\frac{1}{m^{1-\overline{s_0}}}V_{1-\overline{s_0}}\left(\frac{m}{\sqrt{q}}\right)
\sum_{n=1}^{\infty}\frac{\lambda_f(n)}{n^{1-s_0}}W_{1-s_0}\left(\frac{n}{q}\right)\notag \\
&\times&\sideset{}{^\dagger}\sum_{\chi (\bmod q) \atop \chi(-1)=1}\chi(m)\overline{\chi}(n)\tau(\overline{\chi})\tau(\chi)^{2},\notag\\
S_4&=&q^{-2s_0}\frac{\widetilde{\gamma}(1-s_0)}{\widetilde{\gamma}(s_0)}
\sum_{m=1}^{\infty}\frac{1}{m^{\overline{s_0}}}V_{\overline{s_0}}\left(\frac{m}{\sqrt{q}}\right)
\sum_{n=1}^{\infty}\frac{\lambda_f(n)}{n^{1-s_0}}W_{1-s_0}\left(\frac{n}{q}\right)
\sideset{}{^\dagger}\sum_{\chi (\bmod q) \atop \chi(-1)=1}\overline{\chi}(m)\overline{\chi}(n)\tau(\chi)^{2}.\notag
\eea
In the subsequent sections, we will evaluate $S_i, \ i=1,2,3,4$.
\section{Estimation of $S_1$}
\begin{lemma}\label{jmh}
For any $\varepsilon >0$, we have
\bna\label{lemma1}
S_1=\frac{q}{2}L(2\sigma_0,f)+O\left(
q^{\frac{1}{4}+\varepsilon}\tau^{\frac{3}{2}(1-\sigma_0)}+
(q\tau)^{\frac{3}{2}(1-\sigma_0)+\theta+\varepsilon}\right).
\ena
\end{lemma}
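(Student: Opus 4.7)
The plan is to open the character sum in $S_1$ via orthogonality of even primitive characters modulo the prime $q$. For $(mn,q)=1$ one has
\[
\sideset{}{^\dagger}\sum_{\chi \bmod q \atop \chi(-1)=1}\overline{\chi}(m)\chi(n) = \frac{q-1}{2}\bigl(\mathbf{1}_{q\mid n-m} + \mathbf{1}_{q\mid n+m}\bigr) - 1,
\]
so $S_1$ splits into four pieces: the diagonal $n=m$ (the source of the main term); a ``$+$'' off-diagonal $n=m+kq$ with $k\geq 1$; a ``$-$'' off-diagonal $n+m=kq$ with $k\geq 1$; and a principal-character correction $-\bigl(\sum_{(m,q)=1}V_{\overline{s_0}}(m/\sqrt q)m^{-\overline{s_0}}\bigr)\bigl(\sum_{(n,q)=1}\lambda_f(n)W_{s_0}(n/q)n^{-s_0}\bigr)$.

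For the diagonal, I would substitute the Mellin-Barnes representations of $V_{\overline{s_0}}$ and $W_{s_0}$, swap with the $n$-sum so the inner series becomes $L(2\sigma_0+u+v,f)$, and shift both contours past $0$. The $1/(uv)$ factor yields the residue $L(2\sigma_0,f)$ at $u=v=0$, and combined with the prefactor $(q-1)/2$ this produces $\frac{q}{2}L(2\sigma_0,f)+O(|L(2\sigma_0,f)|)$. The residual left-shifted double integral is controlled by Stirling on the gamma ratios, the rapid decay of $G(u)G(v)=e^{u^2+v^2}$, and convexity for $L(s,f)$, and sits inside $R_1$.

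For the off-diagonals the smooth cutoffs force $m \leq \sqrt{q\tau}(\log q\tau)^{O(1)}$ and $k \leq \tau^{1+\varepsilon}$; Kim-Sarnak gives $|\lambda_f(m+kq)| \ll (kq)^{\theta+\varepsilon}$, then $(m+kq)^{s_0} \asymp (kq)^{\sigma_0}$ together with $\sum_{m \leq \sqrt{q\tau}}m^{-\sigma_0} \ll (q\tau)^{(1-\sigma_0)/2}$ and $\sum_{k \leq \tau}k^{\theta-\sigma_0} \ll \tau^{1+\theta-\sigma_0}$ combine with the overall factor $q$ to yield $(q\tau)^{3(1-\sigma_0)/2+\theta+\varepsilon}=R_2$; the ``$-$'' piece is identical after relabelling. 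For the correction I would bound each marginal: the $m$-sum, via Mellin and the pole of $\zeta$ at $u=1-\overline{s_0}$, is $\ll (q\tau)^{(1-\sigma_0)/2+\varepsilon}$; the $n$-sum is $\ll \tau^{1-\sigma_0+\varepsilon}$ by convexity for $L(s_0,f)$. Their product $q^{(1-\sigma_0)/2+\varepsilon}\tau^{3(1-\sigma_0)/2+\varepsilon} \ll R_1$, since $(1-\sigma_0)/2 \leq 1/4$ on $[1/2,1)$.

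The main obstacle will be the bookkeeping for the diagonal contour shift: one must navigate around the gamma poles at $u=-\sigma_0-2k$ (from $V$) and $v=-\sigma_0-2k\pm iT_f$ (from $W$), push $\Re u,\Re v$ far enough left that the remaining double integral genuinely lies inside $R_1$, and control the convexity bound for $L(2\sigma_0+u+v,f)$ uniformly as $\Re(u+v)$ enters the critical strip. Once this is done the other three pieces are essentially mechanical, and no tool more sophisticated than Kim-Sarnak seems to be needed to reach the claimed $R_2$.
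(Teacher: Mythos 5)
Your decomposition is exactly the paper's: orthogonality of even primitive characters modulo the prime $q$ gives $\tfrac{\phi(q)}{2}\sum_{\pm}\mathbf{1}_{n\equiv\pm m\,(\mathrm{mod}\,q)}-1$, and you then treat the diagonal $n=m$ (main term), the off-diagonal congruent terms, and the $-1$ correction separately; your off-diagonal bound via $\lambda_f(m+kq)\ll (kq)^{\theta+\varepsilon}$ reproduces the paper's $q^{-1}(q\tau)^{\frac32(1-\sigma_0)+\theta+\varepsilon}$ for $S_{11}^{\ast}$, and your treatment of the correction term lands within $R_1$ (the paper bounds its $S_{12}$ trivially by $R_2$ instead — either is fine).

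The one substantive gap is the diagonal. The paper does not redo this computation: it quotes Sono's evaluation $S_{11}^{\ast\ast}=L(2\sigma_0,f)+O\bigl(q^{-2\sigma_0+\varepsilon}\tau^{\theta+\varepsilon}+q^{-\frac34+\varepsilon}\tau^{\frac32(1-\sigma_0)}\bigr)$, and it is precisely the second error term here, multiplied by $\phi(q)/2$, that produces $R_1=q^{\frac14+\varepsilon}\tau^{\frac32(1-\sigma_0)}$ — the term that dominates the final theorem at $\sigma_0=\tfrac12$. You propose the same double Mellin--Barnes contour shift that underlies Sono's lemma, but the assertion that the shifted double integral ``sits inside $R_1$'' is exactly the point you flag as the main obstacle and do not verify: one must record where the contours go (the gamma factors restrict $\Re u,\Re v>-\sigma_0$, and the $V$-variable only carries $q^{v/2}$, which is why the saving stops at $q^{-3/4}$), handle the coprimality condition $(n,q)=1$ (the Euler factor at $q$, source of the $q^{-2\sigma_0+\varepsilon}\tau^{\theta+\varepsilon}$ term), and bound $L(2\sigma_0+u+v,f)$ on the shifted lines. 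Either carry out that computation explicitly or cite Sono as the paper does; as written this step is asserted rather than proved. A minor further point: parametrizing the ``$+$'' off-diagonal as $n=m+kq$, $k\geq1$, omits the terms with $n\equiv m\ (\mathrm{mod}\ q)$ and $n<m$, which are nonempty once $\tau\gtrsim q$ (the theorem is uniform in $\tau$); the paper treats $n<m$ separately, and the same trivial Kim--Sarnak estimate disposes of it, so this is easily repaired.
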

\begin{proof}
We write
\bna
S_1=\sum_{m=1}^{\infty}\frac{1}{m^{\overline{s_0}}}V_{\overline{s_0}}\left(\frac{m}{\sqrt{q}}\right)
\sum_{n=1}^{\infty}\frac{\lambda_f(n)}{n^{s_0}}W_{s_0}\left(\frac{n}{q}\right)
\sideset{}{^\dagger}\sum_{\chi(\bmod q) \atop \chi(-1)=1}\overline{\chi}(m)\chi(n).
\ena
By the orthogonality property of Dirichlet characters, we have for $(mn,q)=1$,
\bna
\sideset{}{^\dagger}\sum_{\chi
(\bmod  q) \atop \chi(-1)=1} \chi(n)\overline{\chi}(m)
&=&
\frac{1}{2}\sideset{}{^\dagger}\sum_{\chi (\bmod  q)}\left[1+\chi(-1)\right]\chi(n)\overline{\chi}(m)\notag\\
&=&
\frac{1}{2}\sum_{\pm}\sum_{\chi (\bmod  q)}\chi(\pm n)\overline{\chi}(m)\notag\\
&=&
\frac{1}{2}\sum_{\pm}\left[\phi(q)\mathbf{1}_{m\equiv\pm n (\bmod q)}-1\right].
\ena
Therefore,
\bea\label{S_1}
S_1
&=&\frac{\phi(q)}{2}\sum_{\pm}\sum_{m\geq 1\atop (m,q)=1}\frac{1}{m^{\overline{s_0}}}V_{\overline{s_0}}\left(\frac{m}{\sqrt{q}}\right)
\mathop{\sum_{n\geq 1\atop n\equiv\pm m\text(\bmod q) }} \frac{\lambda_f(n)}{n^{s_0}}W_{s_0}\left(\frac{n}{q}\right) \notag \\
&-&
\sum_{m\geq 1\atop (m,q)=1}\frac{1}{m^{\overline{s_0}}}V_{\overline{s_0}}\left(\frac{m}{\sqrt{q}}\right)
\mathop{\sum_{n\geq 1\atop (n,q)=1}}\frac{\lambda_f(n)}{n^{s_0}}W_{s_0}\left(\frac{n}{q}\right) \notag \\
&=&\frac{\phi(q)}{2}S_{11}-S_{12},
\eea
say. Trivially, we have
\bea\label{S_12}
S_{12}
&=&
\sum_{m\geq 1\atop (m,q)=1}\frac{1}{m^{\overline{s_0}}}V_{\overline{s_0}}\left(\frac{m}{\sqrt{q}}\right)
\mathop{\sum_{n\geq 1\atop (n,q)=1}}\frac{\lambda_f(n)}{n^{s_0}}W_{s_0}\left(\frac{n}{q}\right)\notag \\
&\ll&\sum_{m\leq(q\tau)^{\frac{1}{2}+\varepsilon}}\frac{1}{m^{\sigma_0}}
\sum_{n\leq(q\tau)^{1+\varepsilon}}\frac{|\lambda_f(n)|}{n^{\sigma_0}}
\ll(q\tau)^{\frac{3}{2}-\frac{3}{2}\sigma_0+\theta+\varepsilon}.
\eea
Next, we evaluate $S_{11}$  which contributes the main term. We write
\bna
S_{11}&=& S_{11}^{\ast}+S_{11}^{\ast\ast},
\ena
where
\bna
S_{11}^{\ast}&=&\sum_{\pm}\sum_{m\geq 1\atop (m,q)=1}\frac{1}{m^{\overline{s_0}}}V_{\overline{s_0}}\left(\frac{m}{\sqrt{q}}\right)
\mathop{\sum_{n\geq 1 \atop n\equiv\pm m\text(\bmod q)}}_{ n\neq \pm m}\frac{\lambda_f(n)}{n^{s_0}}W_{s_0}\left(\frac{n}{q}\right), \\
S_{11}^{\ast\ast}&=&\sum_{\pm}\sum_{m\geq 1\atop (m,q)=1}\frac{1}{m^{\overline{s_0}}}V_{\overline{s_0}}\left(\frac{m}{\sqrt{q}}\right)
\mathop{\sum_{n\geq 1\atop n=\pm m}}\frac{\lambda_f(n)}{n^{s_0}}W_{s_0}\left(\frac{n}{q}\right).
\ena
We have
\bna
S_{11}^{\ast}\ll(q\tau)^{\theta+\varepsilon}\sum_{\pm}\sum_{m\leq(q\tau)^{\frac{1}{2}+\varepsilon} }\frac{1}{m^{\sigma_0}}
\mathop{\sum_{n\leq(q\tau)^{1+\varepsilon}\atop n\equiv\pm m\text(\bmod q)}}_{n \neq\pm m}\frac{1}{n^{\sigma_0}},
\ena
where the contribution from $n< m$ is at most
\bna
&&(q\tau)^{\theta+\varepsilon}\sum_{\pm}\sum_{m\leq(q\tau)^{\frac{1}{2}+\varepsilon} }\frac{1}{m^{\sigma_0}}
\mathop{\sum_{n\leq(q\tau)^{1+\varepsilon}\atop n\equiv\pm m\text(\bmod q)}}_{n < m}\frac{1}{n^{\sigma_0}} \notag\\
&\ll&(q\tau)^{\theta+\varepsilon}\sum_{\pm}\sum_{n\leq(q\tau)^{\frac{1}{2}+\varepsilon}}\frac{1}{n^{\sigma_0}}
\sum_{m\leq(q\tau)^{\frac{1}{2}+\varepsilon} \atop n\equiv\pm m\text(\bmod q) }\frac{1}{m^{\sigma_0}}\notag\notag \\
&\ll&(q\tau)^{\theta+\varepsilon}\sum_{\pm}\sum_{n\leq(q\tau)^{\frac{1}{2}+\varepsilon}}\frac{1}{n^{\sigma_0}}\sum_{1\leq k\leq q^{-1}(q\tau)^{1/2+\varepsilon}}\frac{1}{(qk)^{\sigma_0}}\notag\\
&\ll&q^{-1}(q\tau)^{1-\sigma_0+\theta+\varepsilon},
\ena
and the contribution from $n>m$ is bounded by
\bna
&&(q\tau)^{\theta+\varepsilon}\sum_{\pm}\sum_{m\leq(q\tau)^{\frac{1}{2}+\varepsilon} }\frac{1}{m^{\sigma_0}}
\mathop{\sum_{n\leq(q\tau)^{1+\varepsilon}\atop n\equiv\pm m\text(\bmod q)}}_{n >m}\frac{1}{n^{\sigma_0}}\notag\\
&\ll&(q\tau)^{\theta+\varepsilon}\sum_{m\leq(q\tau)^{\frac{1}{2}+\varepsilon}}\frac{1}{m^{\sigma_0}}\sum_{k<q^{-1}(q\tau)^{1+\varepsilon}}\frac{1}{(qk)^{\sigma_0}}\notag\\
&\ll& q^{-1}(q\tau)^{\frac{3}{2}-\frac{3}{2}\sigma_0+\theta+\varepsilon}.
\ena
Hence
\bea\label{S_11*}
S_{11}^{\ast}\ll q^{-1}(q\tau)^{\frac{3}{2}(1-\sigma_0)+\theta+\varepsilon}.
\eea
Note that
\bna
S_{11}^{\ast\ast}=\sum_{(n,q)=1}\frac{\lambda_f(n)}{n^{2\sigma_0}}W_{s_0}\left(\frac{n}{q}\right)
V_{\overline{s_0}}\left(\frac{n}{\sqrt{q}}\right),
\ena
which has been evaluated in Sono \cite{sono} (see Pages 1130-1131 in \cite{sono}). By Sono \cite{sono}, we have
\bea\label{S_11**}
S_{11}^{\ast\ast}=L(2\sigma_0,f)+O(q^{-2\sigma_0+\varepsilon}\tau^{\theta+\varepsilon}
+q^{-\frac{3}{4}+\varepsilon}\tau^{\frac{3}{2}(1-\sigma_0)}).
\eea
By (\ref{S_11*}) and (\ref{S_11**}), we obtain
\bea\label{S_11}
S_{11}= L(2\sigma_0,f)+O(q^{-\frac{3}{4}+\varepsilon}\tau^{\frac{3}{2}(1-\sigma_0)}+
q^{-1}(q\tau)^{\frac{3}{2}(1-\sigma_0)+\theta+\varepsilon}+q^{-2\sigma_0+\varepsilon}\tau^{\theta+\varepsilon}).
\eea
Therefore, by (\ref{S_1}), (\ref{S_12}) and (\ref{S_11}),
\bna
S_1=\frac{q}{2}L(2\sigma_0,f)+O\left(q^{1-2\sigma_0-\theta+\varepsilon}(q\tau)^{\theta+\varepsilon}+
q^{\frac{1}{4}+\varepsilon}\tau^{\frac{3}{2}(1-\sigma_0)}+
(q\tau)^{\frac{3}{2}(1-\sigma_0)+\theta+\varepsilon}\right).
\ena
Note that the first term is dominated by the third term. Then Lemma \ref{jmh} follows.
\end{proof}
\section{Estimation of $S_2$}
Recall that
\bea\label{S_2}
S_2=q^{-\overline{s_0}}\frac{\gamma(1-\overline{s_0})}{\gamma(\overline{s_0})}\sum_{m=1}^{\infty}\frac{1}{m^{1-\overline{s_0}}}V_{1-\overline{s_0}}\left(\frac{m}{\sqrt{q}}\right)
\sum_{n=1}^{\infty}\frac{\lambda_f(n)}{n^{s_0}}W_{s_0}\left(\frac{n}{q}\right)\sideset{}{^\dagger}\sum_{\chi \text(\bmod q) \atop \chi(-1)=1}\chi(m)\chi(n)\tau(\overline{\chi}).
\eea
\begin{lemma}\label{jmhh}
For any $\varepsilon >0$ , we have
\bna\label{lemm2}
S_2\ll q^{\frac{1}{2}}(q\tau)^{-\frac{1}{2}\sigma_0+\frac{1}{2}+\varepsilon}
+q^{-\frac{1}{2}}(q\tau)^{\frac{3}{2}-\frac{3}{2}\sigma_0+\theta+\varepsilon}.
\ena
\end{lemma}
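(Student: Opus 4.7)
The plan is to parallel the treatment of $S_1$: evaluate the inner character sum explicitly, split $S_2$ into a main ``dual'' part and a small principal-character remainder, and estimate the main part by Voronoi summation followed by a Wilton-type bound. As a preliminary, Stirling's formula gives $|\gamma(1-\overline{s_0})/\gamma(\overline{s_0})|\ll \tau^{1/2-\sigma_0}$, which together with the prefactor $q^{-\overline{s_0}}$ enters the final bound cleanly.

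For the character sum, I would use the parity projector $(1+\chi(-1))/2$, orthogonality of Dirichlet characters modulo the prime $q$, and the identity $\sum_{\chi\bmod q}\chi(k)\tau(\overline{\chi})=\phi(q)\,e(k/q)$ for $(k,q)=1$, to show
\[
\sideset{}{^\dag}\sum_{\chi(-1)=1}\chi(mn)\tau(\overline{\chi})=\tfrac{\phi(q)}{2}\bigl(e(mn/q)+e(-mn/q)\bigr)+O(1)
\]
whenever $(mn,q)=1$. This decomposes $S_2=S_2'+S_2''$. Using the effective truncations $m\leq (q\tau)^{1/2+\varepsilon}$ (from rapid decay of $V_{1-\overline{s_0}}$) and $n\leq (q\tau)^{1+\varepsilon}$ (from $W_{s_0}$), together with $|\lambda_f(n)|\ll n^{\theta+\varepsilon}$, the remainder $S_2''$ is bounded trivially by $q^{-1/2}(q\tau)^{3/2-3\sigma_0/2+\theta+\varepsilon}$, giving the second term of the claim.

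For the main part $S_2'$, carrying the phase $e(\pm mn/q)$, I apply Lemma~\ref{Vor} to the $n$-sum with $c=q$ and $d\equiv\pm\overline{m}\pmod q$, converting it into a dual sum of the form $q\sum_{\pm'}\sum_{n'}\frac{\lambda_f(n')}{n'}e(\mp' n'\overline{m}/q)\,\Psi^{\pm'}(n'N/q^{2})$, with $N=q\tau$ and $\Psi^{\pm'}$ the Mellin--Bessel transform of the smooth weight $x^{-s_0}W_{s_0}(x/q)$. A stationary-phase/contour-shift analysis of $\Psi^{\pm'}$ (using Stirling on the gamma factors in $G^{\pm}$) identifies the effective dual length of $n'$ and controls its pointwise size. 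The resulting $n'$-sum, carrying the additive character $e(\mp' n'\overline{m}/q)$, then admits a Wilton-type bound (Lemma~\ref{e}) uniform in $m$, yielding square-root cancellation in $n'$. Summing trivially against the weighted $m$-sum (which contributes $(q\tau)^{\sigma_0/2+\varepsilon}$) and reinserting the archimedean prefactor gives the first claimed term $q^{1/2}(q\tau)^{1/2-\sigma_0/2+\varepsilon}$.

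The hard part is this final kernel analysis: accurately sizing $\Psi^{\pm'}$ and the effective range of $n'$, especially in the transition regime where $n'N/q^{2}\asymp \tau^{2}$, is where one can easily mislay a power of $q$ or $\tau$. The crucial gain relative to Sono~\cite{sono} comes from invoking Wilton on the dual $n'$-sum (with $\alpha=\mp'\overline{m}/q$ uniform in $m$) rather than a trivial pointwise bound; matching the exact exponents in the claim requires careful bookkeeping of all archimedean, arithmetic, and stationary-phase contributions before and after the Voronoi transform.
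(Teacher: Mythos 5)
Your evaluation of the character sum, the splitting $S_2=\tfrac{\phi(q)}{2}S_{21}+S_{22}$, the trivial treatment of the principal-character remainder, and the Stirling bound $\gamma(1-\overline{s_0})/\gamma(\overline{s_0})\ll\tau^{1/2-\sigma_0}$ all agree with the paper and correctly produce the second term $q^{-1/2}(q\tau)^{3/2-\frac{3}{2}\sigma_0+\theta+\varepsilon}$. The gap is in your treatment of the main part. You propose to apply Voronoi summation (Lemma \ref{Vor}) to the $n$-sum and then use the Wilton bound on the dual sum, but you defer exactly the step on which the claimed exponent depends: after reinstating the prefactors $\phi(q)\,q^{-\sigma_0}\tau^{1/2-\sigma_0}$ and the $m$-sum contribution $(q\tau)^{\sigma_0/2+\varepsilon}$, the first term $q^{1/2}(q\tau)^{\frac{1}{2}(1-\sigma_0)+\varepsilon}$ requires the twisted $n$-sum to be $O((q\tau)^{\varepsilon})$. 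Your proposal gives no estimate achieving this. The only information you invoke about $\Psi^{\pm}$ is a contour-shift/Stirling analysis, which (as in the paper's $S_4$ section) yields $\Psi^{\pm}(y)\ll (q\tau)^{\varepsilon}y^{1+\varepsilon}$ with effective dual length $n'\ll q^{2}\tau^{2}/N$; to run Wilton on the dual sum you must do partial summation against $\Psi^{\pm}(n'N/q^{2})$, and that needs derivative (or oscillation) control of $\Psi^{\pm}$ that you never state. The kernel oscillates with frequency up to $\tau$ on the relevant range, so a routine execution loses a factor of about $\tau$ per dyadic block (roughly $N^{1/2-\sigma_0}\tau$ instead of $N^{1/2-\sigma_0}$), which does not match the stated bound; recovering it would require a genuine stationary-phase analysis that the proposal only gestures at.

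Moreover, the Voronoi detour is unnecessary here. The paper's proof never dualizes $S_{21}$: it applies Lemma \ref{e} directly to $\sum_{n\le t}\lambda_f(n)e(\pm nm/q)$, uniformly in $m$, via one partial summation on dyadic blocks $N\ll(q\tau)^{1+\varepsilon}$, using that the weight $t^{-s_0}W_{s_0}(t/q)\Phi(t/N)$ is flat, with derivative $\ll N^{-\sigma_0-1}$. Each block then contributes $\ll N\cdot N^{1/2+\varepsilon}\cdot N^{-\sigma_0-1}=N^{1/2-\sigma_0+\varepsilon}\ll(q\tau)^{\varepsilon}$ precisely because $\sigma_0\ge\tfrac12$, and combining with the $m$-sum, the archimedean prefactor and the factor $\phi(q)/2$ gives the first term at once. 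I recommend replacing your Voronoi-plus-Wilton plan for $S_{21}$ by this direct argument (Voronoi is genuinely needed only in the $S_4$ analysis, where no Wilton-type saving is available).
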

\begin{proof}
By (\ref{S_2}), we have
\bea\label{b}
S_2
&=&\frac{1}{2}q^{-\overline{s_0}}\frac{\gamma(1-\overline{s_0})}{\gamma(\overline{s_0})}\sum_{m=1}^{\infty}\frac{1}{m^{1-\overline{s_0}}}V_{1-\overline{s_0}}\left(\frac{m}{\sqrt{q}}\right)
\sum_{n=1}^{\infty}\frac{\lambda_f(n)}{n^{s_0}}W_{s_0}\left(\frac{n}{q}\right)\sideset{}{^\dagger}\sum_{\chi \text(\bmod q)}\left[1+\chi(-1)\right]\chi(mn)\tau(\overline{\chi})\notag \\
&=&\frac{1}{2}q^{-\overline{s_0}}\frac{\gamma(1-\overline{s_0})}{\gamma(\overline{s_0})}\sum_{\pm}\sum_{m=1}^{\infty}\frac{1}{m^{1-\overline{s_0}}}V_{1-\overline{s_0}}\left(\frac{m}{\sqrt{q}}\right)
\sum_{n=1}^{\infty}\frac{\lambda_f(n)}{n^{s_0}}W_{s_0}\left(\frac{n}{q}\right)\sideset{}{^\dagger}\sum_{\chi \text(\bmod q)}\chi(\pm mn)\tau(\overline{\chi}).
\eea
By the orthogonality of Dirichlet characters, we have
\bea\label{a}
\sideset{}{^\dagger}\sum_{\chi \text(\bmod q)}\chi(\pm m n)\tau(\overline{\chi})
&=&\sideset{}{^\dagger}\sum_{\chi \text(\bmod q)}\ \sideset{}{^\ast}\sum_{a \text(\bmod q)}\overline{\chi}(a)e\left(\frac{a}{q}\right)\chi(\pm mn)\notag \\
&=&\sideset{}{^\ast}\sum_{a \text(\bmod q)}e\left(\frac{a}{q}\right)\sideset{}{^\dagger}\sum_{\chi \text(\bmod q)}\overline{\chi}(a)\chi(\pm mn)\notag \\
&=&\sideset{}{^\ast}\sum_{a \text(\bmod q)}e\left(\frac{a}{q}\right)\left[\phi(q)\mathbf{1}_{a\equiv\pm nm (\bmod q)}-1\right]\notag \\
&=&\phi(q)e\left(\frac{\pm nm}{q}\right)+1.
\eea
Plugging (\ref{a}) into (\ref{b}), one has
\bea
S_2=\frac{\phi(q)}{2}S_{21}+S_{22},
\eea
where
\bna
S_{21}&=&q^{-\overline{s_0}}\frac{\gamma(1-\overline{s_0})}{\gamma(\overline{s_0})}\sum_{\pm}\sum_{m \geq1 \atop (m,q)=1}\frac{1}{m^{1-\overline{s_0}}}V_{1-\overline{s_0}}\left(\frac{m}{\sqrt{q}}\right)
\sum_{n\geq 1 \atop (n,q)=1}\frac{\lambda_f(n)}{n^{s_0}}W_{s_0}\left(\frac{n}{q}\right)e\left(\frac{\pm nm}{q}\right),\\
S_{22}&=&q^{-\overline{s_0}}\frac{\gamma(1-\overline{s_0})}{\gamma(\overline{s_0})}\sum_{m \geq1 \atop (m,q)=1}\frac{1}{m^{1-\overline{s_0}}}V_{1-\overline{s_0}}\left(\frac{m}{\sqrt{q}}\right)
\sum_{n\geq 1 \atop (n,q)=1}\frac{\lambda_f(n)}{n^{s_0}}W_{s_0}\left(\frac{n}{q}\right).
\ena
Trivially,
\bea
S_{22}&\ll& q^{-\sigma_0}\tau^{\frac{1}{2}-\sigma_0}\sum_{m\leq(q\tau)^{\frac{1}{2}+\varepsilon}}\frac{1}{m^{1-\sigma_0}}
\sum_{n\leq(q\tau)^{1+\varepsilon}}\frac{|\lambda_f(n)|}{n^{\sigma_0}}\notag \\
&\ll&q^{-\sigma_0}\tau^{\frac{1}{2}-\sigma_0}(q\tau)^{\frac{1}{2}\sigma_0+\varepsilon}
(q\tau)^{1-\sigma_0+\theta+\varepsilon}\notag \\
&\ll& q^{-\frac{1}{2}}(q\tau)^{\frac{3}{2}-\frac{3}{2}\sigma_0+\theta+\varepsilon}.
\eea

As for $S_{21}$, removing the condition $(n, q) = 1$, we have
\bea
S_{21}&=&q^{-\overline{s_0}}\frac{\gamma(1-\overline{s_0})}{\gamma(\overline{s_0})}\sum_{\pm}\sum_{m \geq1 \atop (m,q)=1}\frac{1}{m^{1-\overline{s_0}}}V_{1-\overline{s_0}}\left(\frac{m}{\sqrt{q}}\right)
\sum_{n\geq 1}\frac{\lambda_f(n)}{n^{s_0}}W_{s_0}\left(\frac{n}{q}\right)e\left(\frac{\pm nm}{q}\right)\notag\\
&+&O\left(q^{-\frac{3}{2}}(q\tau)^{\frac{3}{2}-\frac{3}{2}\sigma_0+\theta+\varepsilon}\right).
\eea
By Lemma \ref{e} and partial summation, we have
\bna
&&\sum_{n=1}^{\infty}\frac{\lambda_f(n)}{n^{s_0}}W_{s_0}\left(\frac{n}{q}\right)e\left(\frac{\pm nm}{q}\right)\\
&=&\sum_{n\leq (q\tau)^{1+\varepsilon}}\frac{\lambda_f(n)}{n^{s_0}}W_{s_0}\left(\frac{n}{q}\right)e\left(\frac{\pm nm}{q}\right)+O((q\tau)^{-K})\\
&=&\sum_{\alpha \atop 2^{\alpha}=N\ll (q\tau)^{1+\varepsilon}}\sum_{n}\lambda_f(n)e\left(\frac{\pm nm}{q}\right)
\Phi\left(\frac{n}{N}\right)n^{-s_0}W_{s_0}\left(\frac{n}{q}\right)+O((q\tau)^{-K})\\
&=&\sum_{\alpha \atop 2^{\alpha}=N\ll (q\tau)^{1+\varepsilon}}\int_{1}^{\infty}\Phi\left(\frac{t}{N}\right)t^{-s_0}W_{s_0}\left(\frac{t}{q}\right)\mathrm{d}\left(\sum_{n\leq t}e\left(\frac{\pm nm}{q}\right)\lambda_f(n)\right)+O((q\tau)^{-K})\\
&=&\sum_{\alpha \atop 2^{\alpha}=N\ll (q\tau)^{1+\varepsilon}}\int_{1}^{\infty}\sum_{n\leq t}\lambda_f(n)e\left(\frac{\pm nm}{q}\right)\left(\Phi\left(\frac{t}{N}\right)t^{-s_0}W_{s_0}\left(\frac{t}{q}\right)\right)^{\prime}\mathrm{d}t+O((q\tau)^{-K}),\\
\ena
where $K$ denotes arbitrary large number, $\Phi(x)$ is a smooth function compactly supported on $[1,2]$ and
\bna
\frac{\mathrm{d}}{\mathrm{d}t}\left(\Phi\left(\frac{t}{N}\right)t^{-s_0}W_{s_0}\left(\frac{t}{q}\right)\right)\ll N^{-\sigma_0-1}.
\ena
Hence, for $\sigma_0\geq\frac{1}{2}$,
\bna
\sum_{n=1}^{\infty}\frac{\lambda_f(n)}{n^{s_0}}W_{s_0}\left(\frac{n}{q}\right)e\left(\frac{\pm nm}{q}\right)\ll\sum_{\alpha \atop 2^{\alpha}=N\ll (q\tau)^{1+\varepsilon}} N\cdot N^{\frac{1}{2}+\varepsilon}\cdot N^{-\sigma_0-1}\ll (q\tau)^{\varepsilon}.
\ena
Thus
\bea
S_{21}&\ll&
q^{-\sigma_0}\tau^{\frac{1}{2}-\sigma_0}(q\tau)^{\frac{1}{2}\sigma_0+\varepsilon}(q\tau)^{\varepsilon}
\ll q^{-\frac{1}{2}}(q\tau)^{\frac{1}{2}-\frac{1}{2}\sigma_0+\varepsilon}.
\eea
By (5.4)-(5.7),
\bna
S_2&\ll& q^{\frac{1}{2}}(q\tau)^{-\frac{1}{2}\sigma_0+\frac{1}{2}+\varepsilon}
+q^{-\frac{1}{2}}(q\tau)^{\frac{3}{2}-\frac{3}{2}\sigma_0+\theta+\varepsilon}.
\ena
\end{proof}
\section{Estimation of $S_3$}
Recall that
\bea\label{S_3}
S_3=q^{-2s_0-\overline{s_0}}\frac{\gamma(1-\overline{s_0})\widetilde{\gamma}(1-s_0)}{\gamma(\overline{s_0})\widetilde{\gamma}(s_0)}
\sum_{m=1}^{\infty}\frac{1}{m^{1-\overline{s_0}}}V_{1-\overline{s_0}}\left(\frac{m}{\sqrt{q}}\right)
\sum_{n=1}^{\infty}\frac{\lambda_f(n)}{n^{1-s_0}}W_{1-s_0}\left(\frac{n}{q}\right)\notag\\
\times\sideset{}{^\dagger}\sum_{\chi \text(\bmod q) \atop \chi(-1)=1}\chi(m)\overline{\chi}(n)\tau(\overline{\chi})\tau(\chi)^{2}.
\eea
\begin{lemma}\label{jmhhh}
For any $\varepsilon>0$, we have
\bna\label{lemm3}
S_3\ll q^{\frac{1}{2}}(q\tau)^{1-\frac{3}{2}\sigma_0+\varepsilon}
+q^{-\frac{1}{2}}(q\tau)^{\frac{3}{2}-\frac{3}{2}\sigma_0+\theta+\varepsilon}.
\ena
\end{lemma}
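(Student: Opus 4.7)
The plan is to parallel the estimation of $S_2$, using Lemma~\ref{q} to reduce the product of Gauss sums and then invoking the Wilton-type bound of Lemma~\ref{e}. Since $\tau(\overline{\chi})\tau(\chi)=q$ by Lemma~\ref{q}, we have $\tau(\overline{\chi})\tau(\chi)^2=q\,\tau(\chi)$, so the character-theoretic weight in $S_3$ simplifies to a single $\tau(\chi)$. Writing $\mathbf{1}_{\chi(-1)=1}=\tfrac{1}{2}(1+\chi(-1))$ to drop the parity restriction, expanding $\tau(\chi)=\sideset{}{^\ast}\sum_{a(\bmod q)}\chi(a)e(a/q)$, and applying the primitive-character orthogonality
\[
\sideset{}{^\dagger}\sum_{\chi(\bmod q)}\chi(\pm m a)\overline{\chi}(n)=\phi(q)\mathbf{1}_{\pm ma\equiv n(\bmod q)}-1
\]
(valid for $(mna,q)=1$), the inner character sum becomes $\phi(q)e(\pm\overline{m}n/q)+1$, since the congruence $\pm ma\equiv n(\bmod q)$ forces $a\equiv\pm\overline{m}n(\bmod q)$. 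This decomposes $S_3=\tfrac{\phi(q)}{2}S_{31}+S_{32}$, where $S_{31}$ carries the additive phases $e(\pm\overline{m}n/q)$ and $S_{32}$ is the decoupled residual sum arising from the $-1$ contribution.

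For $S_{31}$, the $n$-sum $\sum_n\lambda_f(n)n^{-(1-s_0)}W_{1-s_0}(n/q)e(\pm\overline{m}n/q)$ is treated exactly as in the $S_2$ analysis: dyadic partitioning into blocks of size $N\ll(q\tau)^{1+\varepsilon}$, followed by partial summation against the Wilton estimate of Lemma~\ref{e}, yields a per-block contribution of $\ll N^{\sigma_0-1/2+\varepsilon}$, so the full $n$-sum is $\ll(q\tau)^{\sigma_0-1/2+\varepsilon}$ uniformly in $m$. The $m$-sum is bounded trivially by $(q\tau)^{\sigma_0/2+\varepsilon}$. Using Stirling's formula to evaluate the Gamma ratio as $|\gamma(1-\overline{s_0})\widetilde{\gamma}(1-s_0)/[\gamma(\overline{s_0})\widetilde{\gamma}(s_0)]|\ll\tau^{3/2-3\sigma_0}$, and combining with the prefactor $|q^{1-2s_0-\overline{s_0}}|=q^{1-3\sigma_0}$ and $\phi(q)\asymp q$, one arrives at
\[
\tfrac{\phi(q)}{2}S_{31}\ll q^{2-3\sigma_0}\tau^{3/2-3\sigma_0}\cdot(q\tau)^{3\sigma_0/2-1/2+\varepsilon}=q^{1/2}(q\tau)^{1-3\sigma_0/2+\varepsilon},
\]
which is the first error term of the lemma.

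The residual $S_{32}$, which carries no $\phi(q)$ factor, splits into independent $m$- and $n$-sums. A trivial estimate using $|\lambda_f(n)|\ll n^{\theta+\varepsilon}$ yields $\sum_m\ll(q\tau)^{\sigma_0/2+\varepsilon}$ and $\sum_n\ll(q\tau)^{\sigma_0+\theta+\varepsilon}$, and the same archimedean prefactor as above gives $S_{32}\ll q^{-1/2}(q\tau)^{3/2-3\sigma_0/2+\theta+\varepsilon}$, the second error term. The main technical point is ensuring that the Wilton bound is applied uniformly in the arithmetic phase $\pm\overline{m}/q$; this is fortunately guaranteed by the uniformity statement in Lemma~\ref{e}, and the archimedean oscillation coming from $n^{-it_0}$ is absorbed into the smooth weight, whose derivative bound of order $N^{-(1-\sigma_0)-1}$ is what drives the dyadic sum. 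No Voronoi summation is needed for $S_3$; that tool will only become essential in the more delicate estimation of $S_4$.
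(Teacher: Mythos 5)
Your proposal follows essentially the same route as the paper's proof: reduce $\tau(\overline{\chi})\tau(\chi)^{2}$ to $q\,\tau(\chi)$ via Lemma \ref{q}, use orthogonality of primitive characters to produce the phase $e(\pm n\overline{m}/q)$ plus a residual term, bound the residual trivially, and estimate the main term by dyadic decomposition, partial summation and the Wilton-type bound of Lemma \ref{e}, with Stirling's formula handling the archimedean factors, yielding exactly the two stated error terms. The only (harmless) difference is that you do not explicitly remove the coprimality condition $(n,q)=1$ before invoking Lemma \ref{e}; the paper does this at the negligible cost $O\big(q^{-1}(q\tau)^{\frac{3}{2}\sigma_0+\theta+\varepsilon}\big)$, which is absorbed by the second error term.
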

\begin{proof}
By (\ref{S_3}) and (\ref{q_1}) in Lemma \ref{q},
\bna
S_3
=\frac{1}{2}q^{1-2s_0-\overline{s_0}}\frac{\gamma(1-\overline{s_0})\widetilde{\gamma}(1-s_0)}{\gamma(\overline{s_0})\widetilde{\gamma}(s_0)}
\sum_{\pm}\sum_{m=1}^{\infty}\frac{1}{m^{1-\overline{s_0}}}V_{1-\overline{s_0}}\left(\frac{m}{\sqrt{q}}\right)
\sum_{n=1}^{\infty}\frac{\lambda_f(n)}{n^{1-s_0}}W_{1-s_0}\left(\frac{n}{q}\right)\\
\times\sideset{}{^\dagger}\sum_{\chi \text(\bmod q)}\chi(\pm m\overline{n})\tau(\chi).
\ena
By the orthogonality of Dirichlet characters, we have
\bna
\sideset{}{^\dagger}\sum_{\chi \text(\bmod q)}\chi(\pm m\overline{n})\tau(\chi)
&=&\sideset{}{^\ast}\sum_{a \text(\bmod q)}e\left(\frac{a}{q}\right)\sideset{}{^\dagger}\sum_{\chi\text(\bmod q)}\chi(\pm am\overline{n})\notag \\
&=&\sideset{}{^\ast}\sum_{a \text(\bmod q)}e\left(\frac{a}{q}\right)\left[\phi(q)\mathbf{1}_{a\equiv\pm n\overline{m} (\bmod q)}-1\right]\notag \\
&=&\phi(q)e\left(\frac{\pm n\overline{m}}{q}\right)+1.
\ena
Thus
\bea
S_3&=&\frac{\phi(q)}{2}q^{1-2s_0-\overline{s_0}}\frac{\gamma(1-\overline{s_0})\widetilde{\gamma}(1-s_0)}{\gamma(\overline{s_0})\widetilde{\gamma}(s_0)}
\sum_{\pm}\sum_{m\geq1 \atop (m,q)=1}\frac{1}{m^{1-\overline{s_0}}}V_{1-\overline{s_0}}\left(\frac{m}{\sqrt{q}}\right)
\sum_{n\geq1 \atop (n,q)=1}\frac{\lambda_f(n)}{n^{1-s_0}}W_{1-s_0}\left(\frac{n}{q}\right)e\left(\frac{\pm n\overline{m}}{q}\right)\notag \\
&+&q^{1-2s_0-\overline{s_0}}\frac{\gamma(1-\overline{s_0})\widetilde{\gamma}(1-s_0)}{\gamma(\overline{s_0})\widetilde{\gamma}(s_0)}
\sum_{m\geq1 \atop (m,q)=1}\frac{1}{m^{1-\overline{s_0}}}V_{1-\overline{s_0}}\left(\frac{m}{\sqrt{q}}\right)
\sum_{n\geq1 \atop (n,q)=1}\frac{\lambda_f(n)}{n^{1-s_0}}W_{1-s_0}\left(\frac{n}{q}\right)\notag \\
&:=&\frac{\phi(q)}{2}q^{1-2s_0-\overline{s_0}}\frac{\gamma(1-\overline{s_0})\widetilde{\gamma}(1-s_0)}{\gamma(\overline{s_0})\widetilde{\gamma}(s_0)}S_{31}
+q^{1-2s_0-\overline{s_0}}\frac{\gamma(1-\overline{s_0})\widetilde{\gamma}(1-s_0)}{\gamma(\overline{s_0})\widetilde{\gamma}(s_0)}S_{32},
\eea
say. Trivially, we have
\bea
S_{32}\ll\sum_{m\leq(q\tau)^{\frac{1}{2}+\varepsilon}}m^{\sigma_0-1}\sum_{n\leq(q\tau)^{1+\varepsilon}}\frac{|\lambda_f(n)|}{n^{1-\sigma_0}}
\ll(q\tau)^{\frac{3}{2}\sigma_0+\theta+\varepsilon}.
\eea

Removing the condition $(n,q)=1$, we have
\bea
S_{31}=\sum_{\pm}\sum_{m\geq1 \atop (m,q)=1}\frac{1}{m^{1-\overline{s_0}}}V_{1-\overline{s_0}}\left(\frac{m}{\sqrt{q}}\right)
\sum_{n\geq1}\frac{\lambda_f(n)}{n^{1-s_0}}W_{1-s_0}\left(\frac{n}{q}\right)e\left(\frac{\pm n\overline{m}}{q}\right)+O(q^{-1}(q\tau)^{\frac{3}{2}\sigma_0+\theta+\varepsilon}).
\eea
By partial integration once and Lemma \ref{e}, we obtain
\bea
&&
\sum_{\pm}\sum_{m\geq1 \atop (m,q)=1}\frac{1}{m^{1-\overline{s_0}}}V_{1-\overline{s_0}}\left(\frac{m}{\sqrt{q}}\right)
\sum_{n\geq1}\frac{\lambda_f(n)}{n^{1-s_0}}W_{1-s_0}\left(\frac{n}{q}\right)e\left(\frac{\pm n\overline{m}}{q}\right)\notag\\
&\ll&\sum_{\pm}\sum_{\alpha \atop 2^{\alpha}=N\leq(q\tau)^{1+\varepsilon}}\sum_{m\leq(q\tau)^{1/2+\varepsilon}}m^{\sigma_0-1}\
\left|\sum_{n\geq1}\frac{\lambda_f(n)}{n^{1-s_0}}W_{1-s_0}\left(\frac{n}{q}\right)e\left(\frac{\pm n\overline{m}}{q}\right)\Phi\left(\frac{n}{N}\right)\right|\notag\\
&\ll&\sum_{\pm}\sum_{\alpha \atop 2^{\alpha}=N\leq(q\tau)^{1+\varepsilon}}\sum_{m\leq(q\tau)^{1/2+\varepsilon}}m^{\sigma_0-1}
\left|\int_{0}^{\infty}t^{s_0-1}W_{1-s_0}\left(\frac{t}{q}\right)\Phi\left(\frac{t}{N}\right)\mathrm{d}\left(\sum_{n\leq t}\lambda_f(n)e\left(\frac{\pm n\overline{m}}{q}\right)\right)\right|\notag\\
&=&\sum_{\pm}\sum_{\alpha \atop 2^{\alpha}=N\leq(q\tau)^{1+\varepsilon}}\sum_{m\leq(q\tau)^{1/2+\varepsilon}}m^{\sigma_0-1}
\left|\int_{0}^{\infty}\sum_{n\leq t}\lambda_f(n)e\left(\frac{\pm n\overline{m}}{q}\right)\left(t^{s_0-1}W_{1-s_0}\left(\frac{t}{q}\right)\Phi\left(\frac{t}{N}\right)\right)^{\prime}\mathrm{d}t\right|\notag\\
&\ll&(q\tau)^{\frac{1}{2}\sigma_0+\varepsilon}\sum_{\alpha \atop 2^{\alpha}=N\leq(q\tau)^{1+\varepsilon}}N\cdot N^{\frac{1}{2}+\varepsilon}N^{\sigma_0-2}\notag\\
&\ll& (q\tau)^{\frac{3}{2}\sigma_0-\frac{1}{2}+\varepsilon}.
\eea
By applying (5.113) in Iwaniec and Kowalski \cite{ik}, we have
\bea
\frac{\gamma(1-\overline{s_0})\widetilde{\gamma}(1-s_0)}{\gamma(\overline{s_0})\widetilde{\gamma}(s_0)}
=O(\tau^{\frac{3}{2}-3\sigma_0}).
\eea
By (6.2)-(6.6), we conclude that
\bna
S_3&\ll& q^{2-3\sigma_0}\tau^{\frac{3}{2}-3\sigma_0}\big((q\tau)^{\frac{3}{2}\sigma_0-\frac{1}{2}+\varepsilon}
+q^{-1}(q\tau)^{\frac{3}{2}\sigma_0+\theta+\varepsilon}\big)+q^{1-3\sigma_0}\tau^{\frac{3}{2}-3\sigma_0}
(q\tau)^{\frac{3}{2}\sigma_0+\theta+\varepsilon}\notag \\
&\ll& q^{\frac{1}{2}}(q\tau)^{1-\frac{3}{2}\sigma_0+\varepsilon}
+q^{-\frac{1}{2}}(q\tau)^{\frac{3}{2}-\frac{3}{2}\sigma_0+\theta+\varepsilon}.
\ena
\end{proof}
\section{Estimation of $S_4$}
Recall that
\bna
S_4=q^{-2s_0}\frac{\widetilde{\gamma}(1-s_0)}{\widetilde{\gamma}(s_0)}
\sum_{m=1}^{\infty}\frac{1}{m^{\overline{s_0}}}V_{\overline{s_0}}\left(\frac{m}{\sqrt{q}}\right)
\sum_{n=1}^{\infty}\frac{\lambda_f(n)}{n^{1-s_0}}W_{1-s_0}\left(\frac{n}{q}\right)
\sideset{}{^\dagger}\sum_{\chi \text(\bmod q) \atop \chi(-1)=1}\overline{\chi}(m)\overline{\chi}(n)\tau(\chi)^{2}.
\ena
By the orthogonality of Dirichlet characters, we have
\bea\label{x}
\sideset{}{^\dagger}\sum_{\chi \text(\bmod q) \atop \chi(-1)=1}\overline{\chi}(m)\overline{\chi}(n)\tau(\chi)^{2}
&=&\frac{1}{2}\ \sideset{}{^\dagger}\sum_{\chi \text(\bmod q)}\left(\chi(-1)+1\right)\overline{\chi}(mn)\tau(\chi)^{2}\notag
=\frac{1}{2}\sum_{\pm}\sideset{}{^\dagger}\sum_{\chi \text(\bmod q)}\overline{\chi}(\pm mn)\tau(\chi)^{2}\notag\\
&=&\frac{1}{2}\sum_{\pm}\sideset{}{^\dagger}\sum_{\chi \text(\bmod q)}\overline{\chi}(\pm mn)
\left(\sideset{}{^\ast}\sum_{a  \bmod q}\chi(a)e\left(\frac{a}{q}\right)\right)^{2}\notag\\
&=&\frac{1}{2}\sum_{\pm}\sideset{}{^\ast}\sum_{a  \bmod q} \ \sideset{}{^\ast}\sum_{b  \bmod q }e\left(\frac{a+b}{q}\right)\left[\phi(q)\mathbf{1}_{ab\equiv\pm nm (\bmod q)}-1\right]\notag\\
&=&\frac{1}{2}\phi(q)\sum_{\pm}\sideset{}{^\ast}\sum_{a  \bmod q} \ \sideset{}{^\ast}\sum_{b \bmod q\atop b\equiv \pm \overline{a}mn \bmod q}e\left(\frac{a+b}{q}\right)-\frac{1}{2}\sum_{\pm}\sideset{}{^\ast}\sum_{a  \bmod q} \ \sideset{}{^\ast}\sum_{b  \bmod q}e\left(\frac{a+b}{q}\right)\notag\\
&=&\frac{1}{2}\phi(q)\sum_{\pm}S(1,\pm{mn};q)-1.
\eea

By (\ref{x}), we have
\bea\label{S_4}
S_4=\frac{1}{2}\phi(q)q^{-2s_0}\frac{\widetilde{\gamma}(1-s_0)}{\widetilde{\gamma}(s_0)}S_{41}
-q^{-2s_0}\frac{\widetilde{\gamma}(1-s_0)}{\widetilde{\gamma}(s_0)}S_{42},
\eea
where
\bna
S_{41}&=&\sum_{\pm}\sum_{m\geq 1 \atop (m,q)=1}\frac{1}{m^{\overline{s_0}}}V_{\overline{s_0}}\left(\frac{m}{\sqrt{q}}\right)
\sum_{n\geq 1 \atop (n,q)=1}\frac{\lambda_f(n)}{n^{1-s_0}}W_{1-s_0}\left(\frac{n}{q}\right)S(1,\pm{mn};q),\\
S_{42}&=&\sum_{m\geq 1 \atop (m,q)=1}\frac{1}{m^{\overline{s_0}}}V_{\overline{s_0}}\left(\frac{m}{\sqrt{q}}\right)
\sum_{n\geq 1 \atop (n,q)=1}\frac{\lambda_f(n)}{n^{1-s_0}}W_{1-s_0}\left(\frac{n}{q}\right).
\ena
say. By Weil's bound for Kloosterman sums and Lemma \ref{1}, we have
\bea\label{S_42}
S_{42}\ll\sum_{m\leq (q\tau)^{1/2+\varepsilon}}m^{-\sigma_0}
\sum_{n\leq (q\tau)^{1+\varepsilon}}\frac{|\lambda_f(n)|}{n^{1-\sigma_0}}
\ll(q\tau)^{\frac{1}{2}-\frac{1}{2}\sigma_0+\varepsilon}(q\tau)^{\sigma_0+\theta+\varepsilon}
\ll(q\tau)^{\frac{1}{2}+\frac{1}{2}\sigma_0+\theta+\varepsilon}.
\eea
Making smooth partitions of unity into dyadic segments to the sums over $m$ and $n$, we arrive at
\bea\label{S_41}
S_{41}
&=&\sum_{\pm}\sum_{\alpha_1 \atop 2^{\alpha_{1}}=M\ll(q\tau)^{1/2+\varepsilon}}\sum_{\alpha_{2} \atop 2^{\alpha_2}=N\ll(q\tau)^{1+\varepsilon}}T(\pm,M,N),
\eea
where
\bea\label{T}
T(\pm,M,N)=\sum_{ (m,q)=1}m^{-\overline{s_0}}V_{\overline{s_0}}\left(\frac{m}{\sqrt{q}}\right)\mathcal{W}_1\left(\frac{m}{M}\right)\sum_{(n,q)=1}\frac{\lambda_f(n)}{n^{1-s_0}}
W_{1-s_0}\left(\frac{n}{q}\right)\mathcal{W}_2\left(\frac{n}{N}\right)S(1,\pm mn;q).
\eea
Here $\mathcal{W}_{j}(x)\in C_c^{\infty}(1,2) \ (j=1,2)$ satisfying $\mathcal{W}_{j}^{(l)}(x)\ll_l 1$ for  $l\geq0$.
\begin{lemma}\label{last lemma}
For any $\varepsilon>0$, we have
\bna
T(\pm,M,N)&\ll& \tau^{-\frac{1}{2}}(q\tau)^{\frac{1}{2}+\beta_{1}-(\beta_{1}-\beta_{2})\sigma_0+\varepsilon}
+\tau(q\tau)^{(1-\beta_1)\sigma_0+\frac{1}{2}+\varepsilon}\notag \\
&+&\tau(q\tau)^{1+(1-\sigma_0)\beta_1-(1-\sigma_0)\beta_2+(2-\beta_2)\theta+\varepsilon}.
\ena
\end{lemma}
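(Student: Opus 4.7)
My plan is to obtain each of the three terms from a separate estimate. For the first listed term, I would work directly with $T(\pm,M,N)$: apply Weil's bound $|S(1,\pm mn;q)|\ll q^{1/2}$, use the trivial estimate $\sum_{m\asymp M}m^{-\sigma_0}\ll M^{1-\sigma_0}$ for the $m$-sum, and invoke Cauchy--Schwarz together with the Rankin--Selberg bound (Lemma 3) for the $n$-sum to get $\sum_{n\asymp N}|\lambda_f(n)|/n^{1-\sigma_0}\ll N^{\sigma_0+\varepsilon}$. This yields $T\ll q^{1/2}M^{1-\sigma_0}N^{\sigma_0+\varepsilon}$; substituting $M=(q\tau)^{\beta_1}$, $N=(q\tau)^{\beta_2}$ then recovers the first bound.

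For the remaining two terms, I would open the Kloosterman sum as $S(1,\pm mn;q)=\sum_{a\bmod q}^{\ast}e((a\pm\overline{a}mn)/q)$, swap summations, and apply Voronoi summation (Lemma 5) to the inner $n$-sum with $c=q$ and $\overline{d}\equiv\pm\overline{a}m\pmod{q}$. The inner sum becomes a dual sum over $n^*$ weighted by $\lambda_f(n^*)/n^*$, with additive phase $e(\pm'\pm a\overline{m}n^*/q)$ and a Voronoi integral transform $\Phi^{\pm'}(n^*N/q^2)$. A standard stationary-phase and Stirling analysis of the Mellin--Barnes integral (using the Gamma factor $G^\pm$ from (2.6)) shows that $\Phi^{\pm'}$ is essentially supported on $n^*\ll\widetilde{N}:=q^2\tau^{2+\varepsilon}/N$ and of pointwise size $|\Phi^{\pm'}|\ll\tau$ there. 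Summing over $a\bmod q$ then collapses the additive phase to a Ramanujan sum which, for prime $q$, selects the congruence $n^*\equiv\pm m\pmod{q}$ with weight $\phi(q)-1$; the residual $-1$ contribution is of lower order.

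For the third listed bound, I would estimate the resulting post-Voronoi double sum trivially using the pointwise Ramanujan--Petersson bound $|\lambda_f(n^*)|\ll(n^*)^{\theta+\varepsilon}$, the bound $|\Phi^{\pm'}|\ll\tau$, and the density $1/q$ of the arithmetic progression in $[1,\widetilde{N}]$. This yields $T\ll q\tau M^{1-\sigma_0}N^{\sigma_0-1}\widetilde{N}^{\theta+\varepsilon}$, which simplifies to $\tau(q\tau)^{1+(1-\sigma_0)(\beta_1-\beta_2)+(2-\beta_2)\theta+\varepsilon}$ and matches the third bound. For the second listed bound, I would instead apply a more refined estimate on the $n^*$-sum, combining Cauchy--Schwarz, the Rankin--Selberg input of Lemma 3 along the arithmetic progression, partial summation, and the bound $|\Phi^{\pm'}|\ll\tau$ (or, alternatively, further Poisson summation on the $m$-variable); bookkeeping the exponents produces the second bound $\tau(q\tau)^{(1-\beta_1)\sigma_0+1/2+\varepsilon}$.

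The hard part will be the analytic control of the Voronoi transform $\Phi^{\pm'}$: establishing both the effective support $n^*\ll\widetilde{N}=q^2\tau^{2+\varepsilon}/N$ and the pointwise bound $|\Phi^{\pm'}|\ll\tau$ requires careful contour shifts in the Mellin--Barnes representation combined with Stirling asymptotics for the ratio of Gamma functions in $G^\pm$. Once those size bounds are in place, the remaining exponent bookkeeping in $\beta_1,\beta_2,\sigma_0,\theta$ is a routine computation.
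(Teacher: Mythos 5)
There are genuine gaps, and the most important one concerns the second term of the lemma. The three bounds in the statement are not three estimates each valid for all $M,N$: the paper proves them by splitting into three ranges (Case I: $M<(q\tau)^{\beta_1}$, $N<(q\tau)^{\beta_2}$; Case II: $M\geq(q\tau)^{\beta_1}$; Case III: $M<(q\tau)^{\beta_1}$, $N\geq(q\tau)^{\beta_2}$), and each bound is only valid in its own range. You never set up this case division, and it matters: your first estimate $T\ll q^{1/2}M^{1-\sigma_0}N^{\sigma_0+\varepsilon}$ is increasing in $M$ and $N$, so ``substituting $M=(q\tau)^{\beta_1}$, $N=(q\tau)^{\beta_2}$'' is only legitimate when both variables are below these thresholds. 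More seriously, your route to the second bound fails in the range it has to cover, namely $M\geq(q\tau)^{\beta_1}$ with $N$ possibly very small. Applying Voronoi to the $n$-sum there is counterproductive: the dual sum has length $\asymp q^2\tau^2/N$, and no amount of Cauchy--Schwarz/Rankin--Selberg along the progression $n^*\equiv\pm m\pmod q$ recovers $\tau(q\tau)^{(1-\beta_1)\sigma_0+\frac12+\varepsilon}$ (a crude check with $\sigma_0=\tfrac12$, $N$ small already gives exponents far above $1$). The paper's mechanism for Case II is different: Poisson summation in the $m$-variable (shortening the $m$-sum to length $(q\tau)^{1+\varepsilon}/M\leq(q\tau)^{1-\beta_1+\varepsilon}$, which is where the factor $(1-\beta_1)\sigma_0$ comes from), explicit evaluation of the resulting complete character sums $c(m,n;q)=qe(\mp n\overline m/q)+1$, and then the Wilton bound of Lemma \ref{e} with partial summation on the $n$-sum, which supplies the square-root saving $N^{\sigma_0-\frac12}$. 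You never invoke Lemma \ref{e} at all, and your parenthetical ``further Poisson summation on the $m$-variable'' is not developed; as written, the second term of the lemma is not proved.

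A second, smaller gap is in your treatment of the Voronoi transform. The bound you posit, $|\Phi^{\pm'}|\ll\tau$ uniformly on the effective support, is neither what is proved in the paper nor sufficient. The paper shows (by shifting the Mellin--Barnes contour to $\Re(s)=-1+\varepsilon$ and using Stirling) that $\Psi^{\pm}(y)\ll(q\tau)^{\varepsilon}y^{1-\varepsilon}$ for $y\ll\tau^{2}(q\tau)^{\varepsilon}$, and that larger $y$ are negligible. The decay in $y$ for small $y$ is essential: after summing over $a$ you pick up the term $q\mathbf{1}_{n\equiv\mp m\pmod q}$, whose near-diagonal contribution (e.g.\ $n=m$ with $m\asymp M$ small) carries the full factor $q$; with only a flat bound $\ll\tau$ this contribution is of size about $q^{2}\tau N^{\sigma_0-1}M^{\theta-\sigma_0}$, which for small $M$ and $N\asymp(q\tau)^{\beta_2}$ exceeds the third term of the lemma. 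With the bound $\Psi^{\pm}(y)\ll y^{1-\varepsilon}(q\tau)^{\varepsilon}$ this term becomes $N^{\sigma_0}M^{1-\sigma_0+\varepsilon}$ and is harmless, which is exactly how the paper handles it. Your trivial post-Voronoi estimate for the third bound (Ramanujan--Petersson pointwise, density $1/q$) is otherwise in line with the paper's Case III, so this part is repairable once the correct transform bounds are established and the initial term of the progression is treated separately.
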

\begin{proof}
We distinguish three cases according to the ranges of $M$ and $N$. Let $\beta_1$ and $\beta_2$ be positive parameters to be chosen later. By (\ref{S_41}), we assume $0<\beta_1<\frac{1}{2}$ and $0<\beta_2<1$. \\
{\bf Case I:} $M <(q\tau)^{\beta_{1}}, N <(q\tau)^{\beta_{2}}$.

By Lemma \ref{1} and Weil's bound for Kloosterman sums, we have
\bea\label{case1}
T(\pm,M,N)
&\ll& q^{\frac{1}{2}}\sum_{m \asymp M}m^{-\sigma_0}
\sum_{n\asymp N}n^{\sigma_0-1}\notag \\
&\ll&q^{\frac{1}{2}}N^{\sigma_0+\varepsilon}M^{1-\sigma_0}
\ll q^{\frac{1}{2}}(q\tau)^{\beta_{1}-(\beta_{1}-\beta_{2})\sigma_0+\varepsilon}.
\eea
{\bf Case II:} $M \geq (q\tau)^{\beta_{1}} $.

In this case, we apply Poisson summation formula to the $m$-sum to get
\bea\label{2}
&&\sum_{ (m,q)=1}\frac{1}{m^{\overline{s_0}}}V_{\overline{s_0}}\left(\frac{m}{\sqrt{q}}\right)\mathcal{W}_1\left(\frac{m}{M}\right)
e\left(\frac{\pm \overline{a}mn}{q}\right)\notag\\
&=&\frac{1}{M^{\overline{s_0}}}\sideset{}{^\ast}\sum_{\beta \bmod q}
e\left(\frac{\pm \overline{a}\beta n}{q}\right)\sum_{m\equiv \beta  (\bmod q)}\left(\frac{M}{m}\right)^{\overline{s_0}}
V_{\overline{s_0}}\left(\frac{m}{\sqrt{q}}\right)\mathcal{W}_1\left(\frac{m}{M}\right)\notag \\
&=&\frac{1}{M^{\overline{s_0}}}\sideset{}{^\ast}\sum_{\beta  \bmod q}e\left(\frac{\pm \overline{a}\beta n}{q}\right) \ \frac{1}{q}\sum_{m\in\mathbb{Z}}e\left(\frac{\beta m}{q}\right)\int_{\mathbb{R}}\left(\frac{M}{t}\right)^{\overline{s_0}}
V_{\overline{s_0}}\left(\frac{t}{\sqrt{q}}\right)\mathcal{W}_1\left(\frac{t}{M}\right)e\left(\frac{-m t}{q}\right)\mathrm{d}t\notag\\
&=&{q}^{-1}M^{1-\overline{s_0}}\sum_{m\in\mathbb{Z}} \ \sideset{}{^\ast}\sum_{\beta \bmod q}e\left(\frac{(m\pm \overline{a} n)\beta}{q}\right)J(m,q),
\eea
where
\bna
J(m,q)=\int_{\mathbb{R}}t^{-\overline{s_0}}
V_{\overline{s_0}}\left(\frac{M t}{\sqrt{q}}\right)\mathcal{W}_1(t)e\left(\frac{- M m t}{q}\right)\mathrm{d}t.
\ena
By repeated partial integrations,
\bna
J(m,q)\ll \left(1+\frac{|m|M}{q\tau}\right)^{-A}
\ena
for any $A>0$. Thus the contribution from $|m|\geq(q\tau)^{1+\varepsilon}/M$ can be arbitrarily small.

Plugging (\ref{2}) into (\ref{T}), one has
\bea
T(\pm,M,N)&=&{q}^{-1}M^{1-\overline{s_0}}\sum_{|m|\leq (q\tau)^{1+\varepsilon}/M}J(m,q)\sum_{ (n,q)=1}\frac{\lambda_f(n)}{n^{1-s_0}}W_{1-s_0}\left(\frac{n}{q}\right)\mathcal{W}_2\left(\frac{n}{N}\right)
\notag\\
&\times&\sideset{}{^\ast}\sum_{a \bmod q}e\left(\frac{a}{q}\right)  \sideset{}{^\ast}\sum_{\beta  \bmod q}e\left(\frac{(m\pm \overline{a} n)\beta}{q} \right)+O\left((q\tau)^{-A}\right)\notag\\
&:=&{q}^{-1}M^{1-\overline{s_0}}\sum_{|m|\leq (q\tau)^{1+\varepsilon}/M}J(m,q)\sum_{ (n,q)=1}\frac{\lambda_f(n)}{n^{1-s_0}}W_{1-s_0}\left(\frac{n}{q}\right)\mathcal{W}_2\left(\frac{n}{N}\right)c(m,n;q)\notag\\
&+&O\left((q\tau)^{-A}\right),\\ \notag
\eea
where
\bna
c(m,n;q)=\sideset{}{^\ast}\sum_{a \bmod q}e\left(\frac{a}{q}\right)\sideset{}{^\ast}\sum_{\beta \bmod q}e\left(\frac{(m\pm \overline{a} n)\beta}{q}\right).
\ena
$1^{\circ}$ If $m=0$, then
\bna
c(0,n;q)=\sideset{}{^\ast}\sum_{a  \bmod q}e\left(\frac{ a}{q}\right)\sideset{}{^\ast}\sum_{\beta  \bmod q}e\left(\frac{\pm \overline{a}n\beta}{q}\right)=
\left\{\begin{array}{ll}
-\phi(q),&\mbox{if}\, \ q\mid n,\\
1,& \mbox{if}\, \ q\nmid n.
\end{array}\right.
\ena
Thus the contribution from the terms with $m=0$ is at most
\bea
&&\frac{M^{1-\sigma_0}}{q}|J(0,q)|\sum_{n\asymp N \atop q\mid n}\frac{|\lambda_f(n)|}{n^{1-\sigma_0}}\phi(q)
+\frac{M^{1-\sigma_0}}{q}|J(0,q)|\sum_{n\asymp N \atop q\nmid n}\frac{|\lambda_f(n)|}{n^{1-\sigma_0}}\notag\\
&\ll&q^{-1}M^{1-\sigma_0}N^{\sigma_0+\theta+\varepsilon}.
\eea
$2^{\circ}$ If $m\neq0$, then
\bna
c(m,n;q)
&=&\sideset{}{^\ast}\sum_{\beta \bmod q}e\left(\frac{a}{q}\right)\left(\sum_{\beta \bmod q}e\left(\frac{(m\pm \overline{a} n)\beta}{q}\right)-1\right)\\
&=&qe\left(\frac{\mp n\overline{m}}{q}\right)+1.
\ena
Thus the contribution from the terms with $m\neq0$ is
\bea\label{mnot0}
&&M^{1-\overline{s_0}}\sum_{ (n,q)=1}\frac{\lambda_f(n)}{n^{1-s_0}}W_{1-s_0}\left(\frac{n}{q}\right)\mathcal{W}_2\left(\frac{n}{N}\right)
\sum_{0<|m|\leq (q\tau)^{1+\varepsilon}/M}e\left(\frac{\mp n\overline{m}}{q}\right)J(m,q)\notag\\
&+&q^{-1}M^{1-\overline{s_0}}\sum_{ (n,q)=1}\frac{\lambda_f(n)}{n^{1-s_0}}W_{1-s_0}\left(\frac{n}{q}\right)\mathcal{W}_2\left(\frac{n}{N}\right)\sum_{0<|m|\leq (q\tau)^{1+\varepsilon}/M}J(m,q).
\eea
Note that the second term in (\ref{mnot0}) is bounded by
\bea
q^{-1}M^{1-\sigma_0}N^{\sigma_0+\theta+\varepsilon}(q\tau)^{1+\varepsilon}/M\ll \tau^{1+\varepsilon}N^{\sigma_0+\theta+\varepsilon}M^{-\sigma_0}.
\eea
Removing the condition $(n,q)=1$ in the first term in (\ref{mnot0}) at a cost of \ $O(\tau^{1+\varepsilon}N^{\sigma_0+\theta+\varepsilon}M^{-\sigma_0})$ and combining (7.8)-(7.11), we obtain
\bea\label{4}
&&T(\pm,M,N)=M^{1-\overline{s_0}}\sum_{n\geq1}\frac{\lambda_f(n)}{n^{1-s_0}}W_{1-s_0}\left(\frac{n}{q}\right)\mathcal{W}_2\left(\frac{n}{N}\right)
\sum_{0<|m|\leq (q\tau)^{1+\varepsilon}/M}J(m,q)e\left(\frac{\mp n\overline{m}}{q}\right)\notag\\
&+&O\left(\tau^{1+\varepsilon}N^{\sigma_0+\theta+\varepsilon}
M^{-\sigma_0}+q^{-1}M^{1-\sigma_0}N^{\sigma_0+\theta+\varepsilon}\right).
\eea
By Lemma \ref{e} and partial summation, we have
\bea\label{3}
&&\sum_{n\geq1}\frac{\lambda_f(n)}{n^{1-s_0}}W_{1-s_0}\left(\frac{n}{q}\right)\mathcal{W}_2\left(\frac{n}{N}\right)
e\left(\frac{\mp n\overline{m}}{q}\right)\notag\\
&=&\int_{N}^{2N}t^{s_0-1}W_{1-s_0}\left(\frac{t}{q}\right)\mathcal{W}_2\left(\frac{t}{N}\right)\mathrm{d}\left(\sum_{n\leq t}\lambda_f(n)e\left(\frac{\mp n\overline{m}}{q}\right)\right)\notag\\
&=&-\int_{N}^{2N}\sum_{n\leq t}\lambda_f(n)e\left(\frac{\mp n\overline{m}}{q}\right)\left(t^{s_0-1}W_{1-s_0}\left(\frac{t}{q}\right)\mathcal{W}_2\left(\frac{t}{N}\right)\right)^{\prime}\mathrm{d}t\notag\\
&\ll&\tau N^{\sigma_0-1/2+\varepsilon} \ll\tau(q\tau)^{\sigma_0-1/2+\varepsilon}.
\eea
Thus by (\ref{4}) and (\ref{3}),
\bea\label{case2}
T(\pm,M,N)&\ll&\ M^{1-\sigma_0}\frac{(q\tau)^{1+\varepsilon}}{M}\tau(q\tau)^{\sigma_0-\frac{1}{2}+\varepsilon}
+\tau^{1+\varepsilon}N^{\sigma_0+\theta+\varepsilon}M^{-\sigma_0}+
q^{-1}M^{1-\sigma_0}N^{\sigma_0+\theta+\varepsilon}\notag\\
&\ll&\tau(q\tau)^{(1-\beta_1)\sigma_0+\frac{1}{2}+\varepsilon}+\tau(q\tau)^{(1-\beta_1)\sigma_0+\theta+\varepsilon}
+q^{-1}(q\tau)^{(1-\beta_1)\sigma_0+\beta_1+\theta+\varepsilon}\notag\\
&\ll&\tau(q\tau)^{(1-\beta_1)\sigma_0+\frac{1}{2}+\varepsilon}+q^{-1}(q\tau)^{(1-\beta_1)\sigma_0+\beta_1+\theta+\varepsilon}.
\eea
Here we recall that $\theta=\frac{7}{64}$.\\
{\bf Case III:} $M <(q\tau)^{\beta_{1}} $, $N\geq(q\tau)^{\beta_{2}} $.\\
Note that
\bna
&&\sum_{ (m,q)=1}\frac{1}{m^{\overline{s_0}}}V_{\overline{s_0}}\left(\frac{m}{\sqrt{q}}\right)\mathcal{W}_1\left(\frac{m}{M}\right)\sum_{ q\mid n}\frac{\lambda_f(n)}{n^{1-s_0}}W_{1-s_0}\left(\frac{n}{q}\right)\mathcal{W}_2\left(\frac{n}{N}\right)S(1,\pm mn ;q)\notag \\
&\ll&M^{1-\sigma_0}q^{-1}N^{\sigma_0+\theta}\sqrt{q}\notag \\
&\ll& q^{-\frac{1}{2}}(q\tau)^{\beta_1(1-\sigma_0)}
(q\tau)^{\sigma_0+\theta+\varepsilon}\notag \\
&\ll& q^{-\frac{1}{2}}(q\tau)^{\beta_1(1-\sigma_0)+\sigma_0+\theta+\varepsilon},
\ena
where we have used Weil's bound for Kloosterman sums. Thus we can write (\ref{T}) as
\bea\label{TT*}
T(\pm,M,N)=T^{\ast}(\pm,M,N)+O\left(q^{-\frac{1}{2}}(q\tau)^{\beta_1(1-\sigma_0)+\sigma_0+\theta+\varepsilon}\right),
\eea
where
\bea\label{T*}
T^{\ast}(\pm,M,N)&=&
\sum_{(m,q)=1}\frac{1}{m^{\overline{s_0}}}V_{\overline{s_0}}\left(\frac{m}{\sqrt{q}}\right)\mathcal{W}_1\left(\frac{m}{M}\right)\notag \\
&\times&\sum_{n}\frac{\lambda_f(n)}{n^{1-s_0}}W_{1-s_0}\left(\frac{n}{q}\right)\mathcal{W}_2\left(\frac{n}{N}\right)S(1,\pm mn ;q).
\eea

Opening the Kloosterman sum and applying Voronoi summation formula in Lemma \ref{Vor} to the sum over $n$, we get
\bea\label{5}
&&\sum_{n}\frac{\lambda_f(n)}{n^{1-s_0}}W_{1-s_0}\left(\frac{n}{q}\right)\mathcal{W}_2\left(\frac{n}{N}\right)
S(1,\pm mn;q)\notag\\
&=&qN^{s_0-1}\sideset{}{^\ast}\sum_{a \bmod q}e\left(\frac{a}{q}\right)\sum_{\pm}\sum_{n\geq1}\frac{\lambda_f(n)}{n}e\left(\pm\frac{ a\overline{m}n}{q}\right)\Psi^{\pm}\left(\frac{nN}{q^{2}}\right),
\eea
where for $\sigma>-1$,
\bea\label{psi}
\Psi^{\pm}(y)=\frac{1}{2\pi i}\int_{(\sigma)}(\pi^{2}y)^{-s}G^{\pm}(s)
\left(\int_{0}^{\infty}u^{s_0-1}W_{1-s_0}\left(\frac{Nu}{q}\right)\mathcal{W}_2(u)u^{-s-1}\mathrm{d}u\right)\mathrm{d}s.
\eea
Note that the sum over $a$ equals
\bna
\sideset{}{^\ast}\sum_{a   \bmod q}
e\left(\frac{(1\pm n\overline{m})a}{q}\right)=q\mathbf{1}_{n\equiv \mp m (\bmod q)}-1.
\ena
Thus (\ref{5}) is equal to
\bea\label{qN}
qN^{s_0-1}\sum_{\pm}\sum_{n\geq 1}\frac{\lambda_f(n)}{n}\Psi^{\pm}\left(\frac{nN}{q^{2}}\right)
\left(q\mathbf{1}_{n\equiv \mp m (\bmod q)}-1\right).
\eea
Moreover, by repeated integration by parts, the $u$-integral in (\ref{psi}) is bounded by $\left(\frac{\tau}{1+|s|}\right)^j$ for any integer $j\geq0$. Therefore, by (\ref{G}) and Stirling's formula,
\bna
\Psi^{\pm}(y)\ll y^{-\sigma}\int_{|t|\leq \tau^{1+\varepsilon}q^\varepsilon}(1+|t|)^{2\sigma+1}\left(\frac{\tau}{1+|t|}\right)^j\mathrm{d}t+(q\tau)^{-A}
\ena
for any $A>0$. Taking $\sigma=j/2-1-\varepsilon$ with any fixed $j\geq1$, one has
\bna
\Psi^{\pm}(y)\ll y^{-j/2+1+\varepsilon}\tau^j\int_{|t|\leq \tau^{1+\varepsilon}q^\varepsilon}(1+|t|)^{-1-2\varepsilon}\mathrm{d}t+(q\tau)^{-A}\ll y^{1+\varepsilon}\left(\frac{y}{\tau^2}\right)^{-j/2}
\ena
for any fixed $j\geq1$. Therefore, the contribution from $\frac{nN}{q^2\tau^2}\geq(q\tau)^\varepsilon$ in (\ref{qN}) is negligible. For smaller $y$, we move the contour of integration in $\Psi^{\pm}(y)$ to $\Re(s)=-1+\varepsilon$ to get
\bea\label{psiy}
\Psi^{\pm}(y)\ll y^{1-\varepsilon}\int_{|t|\leq \tau^{1+\varepsilon}q^\varepsilon}(1+|t|)^{-1+\varepsilon}\mathrm{d}t\ll (q\tau)^{\varepsilon}y^{1-\varepsilon}.
\eea
By putting (\ref{qN}) into (\ref{T*}), we obtain
\bna
T^{\ast}(\pm,M,N)&=&qN^{s_0-1}\sum_{\pm}\sum_{(m,q)=1}\frac{1}{m^{\overline{s_0}}}V_{\overline{s_0}}\left(\frac{m}{q}\right)
\mathcal{W}_1\left(\frac{m}{M}\right)\\
&\times&\sum_{n\leq(q\tau)^{2+\varepsilon}/N}\frac{\lambda_f(n)}{n}\Psi^{\pm}\left(\frac{nN}{q^{2}}\right)
\left(q\mathbf{1}_{n\equiv \mp m (\bmod q)}-1\right).
\ena

Using (\ref{psiy}) and Lemma \ref{1}, we have
\bea\label{6}
T^{\ast}(\pm,M,N)&\ll& q^2N^{s_0-1}\sum_{m\asymp M}m^{-\sigma_0}\frac{|\lambda_f(m)|}{m}\cdot\frac{m N}{q^{2}}\notag\\
&+&q^2N^{\sigma_0-1}\sum_{m\asymp M}m^{-\sigma_0}\sum_{1\leq k\leq \tau(q\tau)^{1+\varepsilon}/N \atop n=\pm m+kq}n^{\theta-1}\cdot\frac{n N}{q^{2}}\notag\\
&+&qN^{\sigma_0-1}\sum_{m\asymp M}m^{-\sigma_0}\sum_{n\leq (q\tau)^{2+\varepsilon}/N }\frac{|\lambda_f(n)|}{n}\cdot\frac{n N}{q^{2}}\notag\\
&\ll&N^{\sigma_0}M^{1-\sigma_0+\varepsilon}+q\tau^2N^{\sigma_0-1}
M^{1-\sigma_0+\varepsilon}\frac{(q\tau)^{2\theta}}{N^\theta}+q\tau^2N^{\sigma_0-1}
M^{1-\sigma_0+\varepsilon}\notag\\
&\ll&N^{\sigma_0-1}M^{1-\sigma_0+\varepsilon}q\tau^2\frac{(q\tau)^{2\theta}}{N^\theta}\notag\\
&\ll&\tau(q\tau)^{1+(1-\sigma_0)\beta_1-(1-\sigma_0)\beta_2+(2-\beta_2)\theta+\varepsilon}.
\eea
By (\ref{TT*}) and (\ref{6}), we get
\bea\label{case3}
T(\pm,M,N)&\ll& \tau(q\tau)^{1+(1-\sigma_0)\beta_1-(1-\sigma_0)\beta_2+(2-\beta_2)\theta+\varepsilon}+
q^{-\frac{1}{2}}(q\tau)^{\beta_1(1-\sigma_0)+\sigma_0+\theta+\varepsilon}.
\eea
Note that $(2-\beta_2)\theta>\theta$ for $\beta_2<1$ and $1-(1-\sigma_0)\beta_2>\sigma_0$. The second term in (\ref{case3}) is dominated by the first term. Thus
\bea\label{case33}
T(\pm,M,N)&\ll& \tau(q\tau)^{1+(1-\sigma_0)\beta_1-(1-\sigma_0)\beta_2+(2-\beta_2)\theta+\varepsilon}.
\eea
Therefore, Lemma \ref{last lemma} follows from (\ref{case1}), (\ref{case2}) and (\ref{case33}).\
\end{proof}
By Lemma \ref{last lemma} and (\ref{S_41}),
\bea\label{S_41r}
S_{41}&\ll& \tau^{-\frac{1}{2}}(q\tau)^{\frac{1}{2}+(1-\sigma_0)\beta_{1}+\beta_{2}\sigma_0+\varepsilon}
+\tau(q\tau)^{(1-\beta_1)\sigma_0+\frac{1}{2}+\varepsilon}\notag \\
&+&\tau(q\tau)^{1+(1-\sigma_0)\beta_1-(1-\sigma_0)\beta_2+(2-\beta_2)\theta+\varepsilon}.
\eea
Solving the equations
\bna
\left\{
\begin{aligned}
&&\frac{1}{2}+(1-\sigma_0)\beta_{1}+\beta_{2}\sigma_0=(1-\beta_1)\sigma_0+\frac{1}{2}, \\
&&1+(1-\sigma_0)\beta_1-(1-\sigma_0)\beta_2+(2-\beta_2)\theta=(1-\beta_1)\sigma_0+\frac{1}{2},
\end{aligned}
\right.
\ena
 we choose $\beta_1$ and $\beta_2$ as
\bea\label{beta}
\beta_1=\frac{\sigma_0(1-2\theta)}{2(1+\theta)}, \quad \quad \beta_2=\frac{1+4\theta}{2(1+\theta)}.
\eea
Thus by (\ref{S_41r}),
\bea
S_{41}\ll \label{S_41rr}
\tau(q\tau)^{\frac{1}{2}+\left(1-\frac{\sigma_0(1-2\theta)}{2(1+\theta)}\right)\sigma_0+\varepsilon}.
\eea
By Stirling's formula,
\bea\label{jmh1}
\frac{\widetilde{\gamma}(1-s_0)}{\widetilde{\gamma}(s_0)}\ll \tau^{1-2\sigma_0}.
\eea
Substituting (\ref{S_42}), (\ref{beta})-(\ref{jmh1}) into (\ref{S_4}), we conclude that
\bna
S_4&\ll&(q\tau)^{1-2\sigma_0}
\tau(q\tau)^{\frac{1}{2}+\left(1-\frac{\sigma_0(1-2\theta)}{2(1+\theta)}\right)\sigma_0+\varepsilon}
+q^{-2\sigma_0}\tau^{1-2\sigma_0}(q\tau)^{\frac{1}{2}+\frac{\sigma_0}{2}+\theta+\varepsilon}\notag\\
&\ll&
\tau(q\tau)^{\frac{3}{2}-\left(1+\frac{\sigma_0(1-2\theta)}{2(1+\theta)}\right)\sigma_0+\varepsilon}
+\tau(q\tau)^{\frac{1}{2}-\frac{3}{2}\sigma_0+\theta+\varepsilon}.
\ena
Note that $1+\frac{\sigma_0(1-2\theta)}{2(1+\theta)}<\frac{3}{2}$ and $\frac{3}{2}>\frac{1}{2}+\theta$ for $\theta\leq \frac{7}{64}$. Therefore, the first term dominates the second term and
\bea\label{xy}
S_4\ll \tau(q\tau)^{\frac{3}{2}-\left(1+\frac{\sigma_0(1-2\theta)}{2(1+\theta)}\right)\sigma_0+\varepsilon}.
\eea
\section{Completing the proof of Theorem 1}
By (\ref{sum}), Lemma \ref{jmh}-\ref{jmhhh} and (\ref{xy}), we have
\bea
\sideset{}{^\dagger}\sum_{\chi \text(\bmod q) \atop \chi(-1)=1}
L(s_0,f\otimes\chi)\overline{L(s_0,\chi)}=\frac{q}{2}L(2\sigma_0,f)+E,
\eea
where
\bna
E&\ll&q^{\frac{1}{4}+\varepsilon}\tau^{\frac{3}{2}(1-\sigma_0)}+(q\tau)^{\frac{3}{2}(1-\sigma_0)+\theta+\varepsilon}
+q^{\frac{1}{2}}(q\tau)^{-\frac{1}{2}\sigma_0+\frac{1}{2}+\varepsilon}\\
&+&q^{\frac{1}{2}}(q\tau)^{1-\frac{3}{2}\sigma_0+\varepsilon}
+\tau(q\tau)^{\frac{3}{2}-\left(1+\frac{\sigma_0(1-2\theta)}{2(1+\theta)}\right)\sigma_0+\varepsilon}.
\ena
Note that for $\frac{1}{2}\leq\sigma_0<1$, the third term domiantes the fourth term. Therefore,
\bea
E&\ll&q^{\frac{1}{4}+\varepsilon}\tau^{\frac{3}{2}(1-\sigma_0)}+(q\tau)^{\frac{3}{2}(1-\sigma_0)+\theta+\varepsilon}
+q^{\frac{1}{2}}(q\tau)^{\frac{1}{2}(1-\sigma_0)+\varepsilon}\notag\\
&+&\tau(q\tau)^{\frac{3}{2}-\left(1+\frac{\sigma_0(1-2\theta)}{2(1+\theta)}\right)\sigma_0+\varepsilon}.
\eea
By (8.1) and (8.2), Theorem 1 follows.

\end{document}